\numberwithin{equation}{section}
\numberwithin{figure}{section}
\theoremstyle{plain}
\newtheorem{thm}{\protect\theoremname}[section]
  \theoremstyle{definition}
  \newtheorem{defn}[thm]{\protect\definitionname}
  \theoremstyle{plain}
  \newtheorem*{prop*}{\protect\propositionname}
  \theoremstyle{plain}
  \newtheorem{lem}[thm]{\protect\lemmaname}
  \theoremstyle{plain}
  \newtheorem{prop}[thm]{\protect\propositionname}
  \theoremstyle{remark}
  \newtheorem*{rem*}{\protect\remarkname}
  \theoremstyle{plain}
  \newtheorem*{thm*}{\protect\theoremname}
\renewenvironment{enumerate}{\begin{oldenumerate}[topsep=0pt]}{\end{oldenumerate}}
\newtheorem*{rep@theorem}{\rep@title}
\newcommand{\newreptheorem}[2]{%
\newenvironment{rep#1}[1]{%
 \def\rep@title{#2 \ref{##1}}%
 \begin{rep@theorem}}%
 {\end{rep@theorem}}}
  \providecommand{\definitionname}{Definition}
  \providecommand{\lemmaname}{Lemma}
  \providecommand{\propositionname}{Proposition}
  \providecommand{\remarkname}{Remark}
  \providecommand{\theoremname}{Theorem}
\providecommand{\theoremname}{Theorem}
\begin{document}
\selectlanguage{american}%
\global\long\def\o{\mathbf{1}}

\global\long\def\epsilon{\varepsilon}

\global\long\def\phi{\varphi}

\global\long\def\L{\mathcal{L}}

\global\long\def\R{\mathbb{R}}

\global\long\def\mst{\widetilde{M}^{\ast}}

\global\long\def\ms{M^{\ast}}

\title{On the mean width of log-concave functions}

\author{Liran Rotem}

\address{School of Mathematical Sciences\\
Tel-Aviv University\\
Tel-Aviv 69978, Israel}

\email{liranro1@post.tau.ac.il}
\selectlanguage{english}%
\begin{abstract}
In this work we present a new, natural, definition for the mean width
of log-concave functions. We show that the new definition coincide
with a previous one by B. Klartag and V. Milman, and deduce some properties
of the mean width, including an Urysohn type inequality. Finally,
we prove a functional version of the finite volume ratio estimate
and the low-$\ms$ estimate. 
\end{abstract}
\maketitle

\section{Introduction and definitions}

This paper is another step in the ``geometrization of probability''
plan, a term coined by V. Milman. The main idea is to extend notions
and results about convex bodies into the realm of log-concave functions.
Such extensions serve two purposes: Firstly, the new functional results
can be interesting on their own right. Secondly, and perhaps more
importantly, the techniques developed can be used to prove new results
about convex bodies. For a survey of results in this area see \cite{kapranov_geometrization_2008}.

A function $f:\R^{n}\to[0,\infty)$ is called \emph{log-concave} if
it is of the form $f=e^{-\phi}$, where $\phi:\R^{n}\to(-\infty,\infty]$
is a convex function. For us, the definition will also include the
technical assumptions that $f$ is upper semi-continuous and $f$
is not identically $0$. Whenever we discuss $f$ and $\phi$ simultaneously,
we will always assume they satisfy the relation $f=e^{-\phi}$. Similar
relation will be assumed for $\widetilde{f}$ and $\tilde{\phi}$,
$f_{k}$ and $\phi_{k}$, etc. The class of log-concave functions
naturally extends the class of convex bodies: if $\emptyset\ne K\subseteq\R^{n}$
is a closed, convex set, then its characteristic function $\o_{K}$
is a log-concave function.

On the class of convex bodies there are two important operations.
If $K$ and $T$ are convex bodies then their \emph{Minkowski sum}
is $K+T=\left\{ k+t:\ k\in K,t\in T\right\} $. If in addition $\lambda>0$,
then the \emph{$\lambda$-homothety} of $K$ is $\lambda\cdot K=\left\{ \lambda k:\ k\in K\right\} $.
These operations extend to log-concave functions: If $f$ and $g$
are log-concave we define their \emph{Asplund product} (or \emph{sup-convolution}),
to be
\[
\left(f\star g\right)(x)=\sup_{x_{1}+x_{2}=x}f(x_{1})g(x_{2}).
\]
 If in addition $\lambda>0$ we define the \emph{$\lambda$-homothety}
of $f$ to be
\[
\left(\lambda\cdot f\right)(x)=f\left(\frac{x}{\lambda}\right)^{\lambda}.
\]
 It is easy to see that these operations extend the classical operations,
in the sense that $1_{K}\star1_{T}=1_{K+T}$ and $\lambda\cdot1_{K}=1_{\lambda K}$
for every convex bodies $K,T$ and every $\lambda>0$. It is also
useful to notice that if $f$ is log-concave and $\alpha,\beta>0$
then $(\alpha\cdot f)\star(\beta\cdot f)=(\alpha+\beta)\cdot f$.
In particular, $f\star f=2\cdot f$. 

The main goal of this paper is to define the notion of \emph{mean
width} for log-concave functions. For convex bodies, this notion requires
we fix an Euclidean structure on $\R^{n}$. Once we fix such a structure
we define the \emph{support function} of a body $K$ to be $h_{K}(x)=\sup_{y\in K}\left\langle x,y\right\rangle $.
The function $h_{K}:\R^{n}\to(-\infty,\infty]$ is convex and 1-homogeneous.
The mean width of $K$ is defined to be
\begin{equation}
\ms(K)=\int_{S^{n-1}}h_{K}(\theta)d\sigma(\theta),\label{eq:mstar-def1}
\end{equation}
where $\sigma$ is the normalized Haar measure on the unit sphere
$S^{n-1}=\left\{ x\in\R^{n}:\ \left|x\right|=1\right\} $. 

The correspondence between convex bodies and support functions is
linear, in the sense that $h_{\lambda K+T}=\lambda h_{K}+h_{T}$ for
every convex bodies $K$ and $T$ and every $\lambda>0$. It immediately
follows that the mean width is linear as well. It is also easy to
check that $\ms$ is translation and rotation invariant, so $\ms(uK)=\ms(K)$
for every isometry $u:\R^{n}\to\R^{n}$. 

We will also need the equivalent definition of mean width as a \emph{quermassintegrals}:
Let $D\subseteq\R^{n}$ denote the euclidean ball. If $K\subseteq\R^{n}$
is any convex body then the $n$-dimensional volume $\left|K+tD\right|$
is a polynomial in $t$ of degree $n$, known as the \emph{Steiner
polynomial}. More explicitly, one can write
\[
\left|K+tD\right|=\sum_{i=0}^{n}\binom{n}{i}V_{n-i}(K)t^{i},
\]
and the coefficients $V_{i}(K)$ are known as the quermassintegrals
of $K$. One can also give explicit definitions for the $V_{i}$'s,
and it follows that $V_{1}(K)=\left|D\right|\cdot\ms(K)$ (more information
and proofs can be found for example in \cite{milman_geometrical_1985}
or \cite{schneider_convex_1993} ). From this it's not hard to prove
the equivalent definition 
\begin{equation}
\ms(K)=\frac{1}{n\left|D\right|}\cdot\lim_{\epsilon\to0^{+}}\frac{\left|D+\epsilon K\right|-\left|D\right|}{\epsilon}.\label{eq:mstar-def2}
\end{equation}

This last definition is less geometric in nature, but it suits some
purposes extremely well. For example, using the Brunn-Minkowski theorem
(again, check \cite{milman_geometrical_1985} or \cite{schneider_convex_1993}),
one can easily deduce the \emph{Urysohn inequality}:
\[
\ms(K)\ge\left(\frac{\left|K\right|}{\left|D\right|}\right)^{\frac{1}{n}}
\]
for every convex body $K$. 

In \cite{klartag_geometry_2005}, B. Klartag and V. Milman give a
definition for the mean width of a log-concave function, based on
definition (\ref{eq:mstar-def2}). The role of the volume is played
by Lebesgue integral (which makes sense because $\int\o_{K}dx=\left|K\right|$),
and the euclidean ball $D$ is replaced by a Gaussian $G(x)=e^{-\frac{\left|x\right|^{2}}{2}}$.
The result is the following definition:
\begin{defn}
\label{def:mean-width-tilde-def}The mean width of a log-concave function
$f$ is 
\[
\mst(f)=c_{n}\lim_{\epsilon\to0^{+}}\frac{\int G\star\left(\epsilon\cdot f\right)-\int G}{\epsilon}.
\]
 Here $c_{n}=\frac{2}{n(2\pi)^{\frac{n}{2}}}$ is a normalization
constant, chosen to have $\mst(G)=1$. 

Some properties of $\mst$ are not hard to prove. For example, it
is easy to see that $\mst$ is rotation and translation invariant.
It is also not hard to prove a functional Urysohn inequality:\end{defn}
\begin{prop*}
If $f$ is log-concave and $\int f=\int G$, then $\mst(f)\ge\mst(G)=1$.
\end{prop*}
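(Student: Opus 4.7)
The approach is to bound $\int G\star(\epsilon\cdot f)$ from below using the Pr\'ekopa--Leindler inequality, which is the functional analogue of Brunn--Minkowski and so will play the role Brunn--Minkowski plays in the classical proof of Urysohn. The form I would use is the standard log-concave consequence
\[
\int\bigl((1-t)\cdot g_{1}\bigr)\star\bigl(t\cdot g_{2}\bigr)\ge\left(\int g_{1}\right)^{1-t}\!\left(\int g_{2}\right)^{t},\qquad t\in(0,1),
\]
which follows at once by applying Pr\'ekopa--Leindler to the supremum defining the Asplund product.

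The key choice is the parametrization. I would set $t=\epsilon$, $g_{2}=f$, and $g_{1}=\tfrac{1}{1-\epsilon}\cdot G$; using the identity $\lambda\cdot(\mu\cdot h)=(\lambda\mu)\cdot h$, the factor $(1-\epsilon)\cdot g_{1}$ collapses to $G$, so the left-hand side becomes exactly $\int G\star(\epsilon\cdot f)$. On the right, $\int g_{2}=\int f=\int G=(2\pi)^{n/2}$ by hypothesis, while a direct Gaussian computation, using $\bigl(\tfrac{1}{1-\epsilon}\cdot G\bigr)(x)=e^{-(1-\epsilon)|x|^{2}/2}$, gives $\int g_{1}=(1-\epsilon)^{-n/2}(2\pi)^{n/2}$. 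The resulting explicit bound is
\[
\int G\star(\epsilon\cdot f)\;\ge\;(2\pi)^{n/2}\,(1-\epsilon)^{-n(1-\epsilon)/2}.
\]

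To conclude I would subtract $\int G=(2\pi)^{n/2}$, divide by $\epsilon$, and send $\epsilon\to 0^{+}$. Expanding $-(1-\epsilon)\log(1-\epsilon)$ near the origin gives $(1-\epsilon)^{-n(1-\epsilon)/2}=1+\tfrac{n}{2}\epsilon+O(\epsilon^{2})$, so the right-hand side above behaves like $(2\pi)^{n/2}(1+\tfrac{n}{2}\epsilon+O(\epsilon^{2}))$. Dividing by $\epsilon$ and passing to the limit yields $\liminf_{\epsilon\to 0^{+}}\tfrac{1}{\epsilon}\bigl(\int G\star(\epsilon\cdot f)-\int G\bigr)\ge\tfrac{n}{2}(2\pi)^{n/2}$, and multiplying by $c_{n}=\frac{2}{n(2\pi)^{n/2}}$ gives $\mst(f)\ge 1=\mst(G)$.

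I do not expect a substantive obstacle; the only real thought is the choice $g_{1}=\tfrac{1}{1-\epsilon}\cdot G$, which is forced by two requirements: we need $(1-\epsilon)\cdot g_{1}$ to reproduce $G$ inside the Asplund product, and we need the explicit lower bound to agree with $\int G\star(\epsilon\cdot G)=(1+\epsilon)^{n/2}(2\pi)^{n/2}$ to first order in $\epsilon$, so that the derivative estimate is not lossy. (One can check the two expressions even agree to second order, which is a good sanity check.) Existence of the limit defining $\mst(f)$ is a separate matter and is irrelevant here, since only a lower bound on the $\liminf$ of the difference quotient is used.
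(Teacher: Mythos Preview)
Your proposal is correct and follows exactly the approach the paper indicates: the paper does not write out the argument but refers to \cite{klartag_geometry_2005} and says one replaces Brunn--Minkowski by Pr\'ekopa--Leindler, which is precisely what you do. Your parametrization $g_{1}=\tfrac{1}{1-\epsilon}\cdot G$, $g_{2}=f$ is a clean way to make the Asplund weights match the Pr\'ekopa--Leindler weights, and the first-order computation is right.
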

The proof, that appears in \cite{klartag_geometry_2005}, is similar
to the standard proof for convex bodies. Instead of the Brunn-Minkowski
theorem one uses its functional version, known as the Prékopa\textendash{}Leindler
inequality (see, e.g. \cite{pisier_volume_1989}). For other applications,
however, this definition is rather cumbersome to work with. For example,
by looking at the definition it is not at all obvious that $\mst$
is a linear functional. It is proven in \cite{klartag_geometry_2005}
that indeed
\[
\mst\left(\left(\lambda\cdot f\right)\star g\right)=\lambda\mst(f)+\mst(g),
\]
 but only for sufficiently regular log-concave functions $f$ and
$g$. These difficulties, and the fact that the definition has no
clear geometric intuition, made V. Milman raise the questions of whether
definition \ref{def:mean-width-tilde-def} is the ``right'' definition
for mean width of log-concave functions.

We would like to give an alternative definition for mean width, based
on the original definition (\ref{eq:mstar-def1}). To do so, we first
need to explain what is the support function of a log-concave function,
following a series of papers by S. Artstein-Avidan and V. Milman.
To state their result, assume that $\mathcal{T}$ maps every (upper
semi-continuous) log-concave function to its support function which
is lower semi-continuous and convex. It is natural to assume that
$\mathcal{T}$ is a bijection, so a log-concave function can be completely
recovered from its support function. It is equally natural to assume
that $\mathcal{T}$ is order preserving, that is $\mathcal{T}f\ge\mathcal{T}g$
if and only if $f\ge g$ - this is definitely the case for the standard
support function defined on convex bodies. In \cite{artstein-avidan_hidden_2011}
it is shown that such a $\mathcal{T}$ must be of the form
\[
\left(\mathcal{T}f\right)(x)=C_{1}\cdot\left[\L(-\log f)\right](Bx+v_{0})+\left\langle x,v_{1}\right\rangle +C_{0}
\]
 for constants $C_{0},C_{1}\in\R$, vectors $v_{0},v_{1}\in\R^{n}$
and a transformation $B\in GL_{n}$. Here $\L$ is the classical \emph{Legendre
transform}, defined by 
\[
\left(\L\phi\right)(x)=\sup_{y\in\R^{n}}\left(\left\langle x,y\right\rangle -\phi(y)\right).
\]

We of course also want $\mathcal{T}$ to extend the standard support
function. This significantly reduces the number of choices and we
get that $\left(\mathcal{T}f\right)(x)=\frac{1}{C}\left[\L(-\log f)\right]\left(Cx\right)$
for some $C>0$. The exact choice of $C$ is not very important, and
we will choose the convenient $C=1$. In other words, we define the
support function $h_{f}$ of a log-concave function $f$ to be $\L(-\log f)$.
Notice that the support function interacts well with the operations
we defined on log-concave functions: it is easy to check that $h_{\left(\lambda\cdot f\right)\star g}=\lambda h_{f}+h_{g}$
for every log-concave functions $f$ and $g$ and every $\lambda>0$
(in fact this property also completely characterizes the support function
- see \cite{artstein-avidan_characterization_2010}).

We would like to define the mean width of a log-concave function as
the integral of its support function with respect to some measure
on $\R^{n}$. In (\ref{eq:mstar-def1}) the measure being used is
the Haar measure on $S^{n-1}$, but since $h_{K}$ is always 1-homogeneous
this is completely arbitrary: for \emph{every} rotationally invariant
probability measure $\mu$ on $\R^{n}$ one can find a constant $C_{\mu}>0$
such that
\[
\ms(K)=C_{\mu}\int_{\R^{n}}h_{K}(x)d\mu(x)
\]
 for every convex body $K\subseteq\R^{n}$. We choose to work with
Gaussians:
\begin{defn}
\label{def:mean-width}The mean width of log-concave function $f$
is 
\[
\ms(f)=\frac{2}{n}\int_{\R^{n}}h_{f}(x)d\gamma_{n}(x),
\]
 where $\gamma_{n}$ is the standard Gaussian probability measure
on $\R^{n}$ ($d\gamma_{n}=\left(2\pi\right)^{-\frac{n}{2}}e^{-\frac{\left|x\right|^{2}}{2}}dx$).
\end{defn}
The main result of section \ref{sec:equivalence} is the fact that
the two definitions given above are, in fact, the same:
\begin{thm}
\label{thm:equality}$\ms(f)=\mst(f)$ for every log-concave function
$f$.
\end{thm}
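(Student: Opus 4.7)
The plan is to write $G\star(\epsilon\cdot f)$ in closed form via Legendre duality, convert the difference quotient defining $\mst(f)$ into a single Gaussian integral, and then pass to the limit $\epsilon\to 0^+$.

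First I would carry out the direct computation. Writing out the Asplund product, substituting $u = x_2/\epsilon$, and expanding $|x-\epsilon u|^2 = |x|^2 - 2\epsilon\langle x,u\rangle + \epsilon^2|u|^2$, one finds
\begin{equation*}
(G\star(\epsilon\cdot f))(x) = G(x)\cdot e^{\epsilon\psi_\epsilon(x)}, \qquad \psi_\epsilon(x) := \sup_{u\in\R^n}\left(\langle x,u\rangle - \phi(u) - \tfrac{\epsilon}{2}|u|^2\right),
\end{equation*}
so that $\psi_\epsilon = \L(\phi + \tfrac{\epsilon}{2}|\cdot|^2)$. Since $\phi + \tfrac{\epsilon}{2}|\cdot|^2$ decreases monotonically to $\phi$ as $\epsilon\downarrow 0$, its Legendre transform increases monotonically, and a short case analysis (separating $h_f(x)<\infty$ and $h_f(x)=\infty$) shows $\psi_\epsilon(x)\uparrow h_f(x)$ pointwise.

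Using $\int G\,dx = (2\pi)^{n/2}$, this identity converts the difference quotient into
\begin{equation*}
\frac{\int G\star(\epsilon\cdot f) - \int G}{\epsilon} = (2\pi)^{n/2}\int_{\R^n}\frac{e^{\epsilon\psi_\epsilon(x)}-1}{\epsilon}\,d\gamma_n(x).
\end{equation*}
The integrand converges pointwise to $h_f(x)$ as $\epsilon\to 0^+$, and since $c_n(2\pi)^{n/2} = 2/n$, once the exchange of limit and integral is justified we obtain $\mst(f) = (2/n)\int h_f\,d\gamma_n = \ms(f)$.

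The main obstacle is justifying this exchange. I would lean on two elementary monotonicity facts: (i) for each fixed $y\in\R$ the map $\epsilon\mapsto(e^{\epsilon y}-1)/\epsilon$ is nondecreasing in $\epsilon>0$ with limit $y$ as $\epsilon\downarrow 0$; and (ii) $y\mapsto(e^{\epsilon y}-1)/\epsilon$ is increasing in $y$. Together with $\psi_\epsilon\le h_f$ these yield the uniform upper bound
\begin{equation*}
\frac{e^{\epsilon\psi_\epsilon(x)}-1}{\epsilon} \le \frac{e^{\epsilon_0 h_f(x)}-1}{\epsilon_0} \qquad (0<\epsilon\le\epsilon_0),
\end{equation*}
while testing the sup defining $\psi_\epsilon$ at any point $u_0$ with $\phi(u_0)<\infty$ gives a linear, $\gamma_n$-integrable lower bound $\psi_\epsilon(x)\ge\langle x,u_0\rangle - \phi(u_0) - |u_0|^2/2$ valid for $\epsilon\le 1$. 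When the upper bound above is $\gamma_n$-integrable, dominated convergence closes the argument; in the degenerate regime $\int h_f\,d\gamma_n = +\infty$, Fatou's lemma applied to the shifted nonnegative integrand $(e^{\epsilon\psi_\epsilon(x)}-1)/\epsilon + C(1+|x|)$ forces $\mst(f) = +\infty$ as well.
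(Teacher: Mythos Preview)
Your computation of $G\star(\epsilon\cdot f)=G\cdot e^{\epsilon\psi_\epsilon}$ and the pointwise monotone convergence $\psi_\epsilon\uparrow h_f$ are correct, and this is exactly the core calculation the paper carries out in Lemma~\ref{lem:equality-restricted}. The gap is in your final dichotomy. You treat two cases: either the dominating function $(e^{\epsilon_0 h_f}-1)/\epsilon_0$ is $\gamma_n$-integrable for some $\epsilon_0>0$, or $\int h_f\,d\gamma_n=+\infty$. These are \emph{not} exhaustive. Take $n=1$ and $\psi(y)=\tfrac12(1+y^2)\log(1+y^2)$; this is a smooth, finite, convex function, so $\psi=\L\phi$ for the convex $\phi:=\L\psi$, and $f=e^{-\phi}$ is a legitimate log-concave function with $h_f=\psi$. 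Then $\int h_f\,d\gamma_1<\infty$ (since $y^2\log(1+y^2)$ is Gaussian-integrable), yet for every $\epsilon_0>0$ the exponent $\epsilon_0\psi(y)-y^2/2\sim\epsilon_0 y^2\log|y|\to+\infty$, so $\int e^{\epsilon_0 h_f}\,d\gamma_1=\infty$. For such $f$ your dominated convergence argument fails and Fatou only yields the inequality $\mst(f)\ge\ms(f)$.

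This is precisely why the paper does not attempt a direct DCT for general $f$. It first restricts to bounded, compactly supported $f$ (Lemma~\ref{lem:equality-restricted}), where one has the linear bound $h_f(x)\le R|x|+M$ and hence an honestly integrable majorant $e^{R|x|+M}$. The passage to arbitrary $f$ is then handled by an approximation argument (Proposition~\ref{pro:approximation}): one takes $f_k=\min(f\cdot\o_{|x|\le k},k)\uparrow f$ and shows $\mst(f_k)\to\mst(f)$, not by DCT, but by exploiting the log-concavity of $\epsilon\mapsto\int G\star(\epsilon\cdot f)$ to write the right-derivative at $0$ as a supremum of difference quotients and interchange two suprema. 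Your proposal is missing this step or an equivalent device; the upper bound you wrote down is simply too crude to be integrable in general.
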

This theorem gives strong indication that our definition for mean
width is the ``right'' one.

In section \ref{sec:properties} we present some basic properties
of the functional mean width. The highlight of this section is a new
proof of the functional Urysohn inequality, based on definition \ref{def:mean-width}.
Since this definition involves no limit procedure, it is also possible
to characterize the equality case: 
\selectlanguage{american}%
\begin{thm}
\label{thm:functional-urysohn}\foreignlanguage{english}{For any log-concave
$f$ 
\[
M^{*}(f)\ge2\log\left(\frac{\int f}{\int G}\right)^{\frac{1}{n}}+1,
\]
 with equality if and only if $\int f=\infty$ or $f(x)=Ce^{-\frac{\left|x-a\right|^{2}}{2}}$
for some $C>0$ and $a\in\R^{n}$. }
\end{thm}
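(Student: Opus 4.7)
My plan is to deduce the inequality from the concavity along the Asplund interpolation
\[
F_\lambda := (1-\lambda)\cdot G\,\star\,\lambda\cdot f, \qquad \lambda\in[0,1],
\]
joining $F_0=G$ to $F_1=f$. The key algebraic identity
\[
(1-\mu)\cdot F_{\lambda_1}\,\star\,\mu\cdot F_{\lambda_2} = F_{(1-\mu)\lambda_1+\mu\lambda_2},
\]
which follows by distributing the homothety over $\star$, using $\alpha\cdot h\star\beta\cdot h=(\alpha+\beta)\cdot h$, and the direct Gaussian computation $(1-\mu)\cdot G\star\mu\cdot G=G$, combined with the Prékopa--Leindler inequality $\int\alpha\cdot h_1\star\beta\cdot h_2\geq (\int h_1)^\alpha(\int h_2)^\beta$ for $\alpha+\beta=1$, shows that $u(\lambda):=\log\int F_\lambda$ is concave on $[0,1]$. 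Concavity yields $u(1)-u(0)\leq u'(0^+)$, so once I identify $u'(0^+)=\frac{n}{2}(\ms(f)-1)$ the Urysohn inequality follows by rearranging $\log(\int f/\int G)\leq \frac{n}{2}(\ms(f)-1)$.

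To evaluate $u'(0^+)$, I apply the envelope theorem to
\[
-\log F_\lambda(z) = \inf_{w\in\R^n}\left[\frac{|z-\lambda w|^2}{2(1-\lambda)} + \lambda\phi(w)\right]
\]
(obtained by the substitution $y=\lambda w$ in the definition of the Asplund product). At $\lambda=0$ every $w$ is a minimizer, so the right derivative in $\lambda$ equals $\inf_w\bigl[|z|^2/2-\langle z,w\rangle+\phi(w)\bigr]=\frac{|z|^2}{2}-h_f(z)$, giving the pointwise expansion $F_\lambda(z)=G(z)\bigl(1+\lambda(h_f(z)-|z|^2/2)\bigr)+o(\lambda)$. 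Integrating and using Definition~\ref{def:mean-width} in the form $\int h_f\,d\gamma_n=\frac{n}{2}\ms(f)$, together with $\int\frac{|z|^2}{2}d\gamma_n=\frac{n}{2}$, produces
\[
\int F_\lambda=(2\pi)^{n/2}\Bigl[1+\frac{n\lambda}{2}(\ms(f)-1)\Bigr]+o(\lambda),
\]
yielding the claimed value of $u'(0^+)$.

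For the equality analysis: if $\int f=\infty$, then $\phi=-\log f$ fails to be superlinear in some direction, forcing $h_f=\L\phi$ to equal $+\infty$ on a half-space, so $\ms(f)=+\infty$ and both sides of the inequality coincide. If $\int f<\infty$, equality in $u(1)-u(0)\leq u'(0^+)$ forces $u$ to be affine on $[0,1]$, so the Prékopa--Leindler inequality is tight for every pair $(\lambda_1,\lambda_2)\in[0,1]^2$; the classical equality case of Prékopa--Leindler (Dubuc) then forces $f$ to be a translate and positive rescaling of $G$, i.e.\ $f(x)=Ce^{-|x-a|^2/2}$. The main technical obstacle I anticipate is justifying the interchange of limit and integral in the first-order expansion; this is most cleanly handled by appealing to Theorem~\ref{thm:equality} and recognizing $u'(0^+)$ through the defining limit of $\mst$ rather than via a direct dominated-convergence argument.
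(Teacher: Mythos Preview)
Your approach is correct and takes a genuinely different route from the paper's. The paper does \emph{not} argue via Pr\'ekopa--Leindler; instead it works directly from Definition~\ref{def:mean-width}, applying Shannon's inequality with $p=(2\pi)^{-n/2}e^{-|x|^2/2}$ and $q=e^{-h_f}$ to obtain
\[
\ms(f)\ \ge\ 1+\log(2\pi)-\frac{2}{n}\log\int e^{-h_f},
\]
and then invoking the functional Santal\'o inequality $\int f\cdot\int e^{-h_f}\le(2\pi)^n$ (after a suitable translation of $f$) to finish. The equality case is read off from equality in Shannon's inequality, which forces $h_f(x)=\tfrac{|x|^2}{2}+\text{const}$. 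Your Pr\'ekopa--Leindler route is essentially the argument the paper attributes to Klartag--Milman for the bare inequality; what you add is the equality case via Dubuc's characterization, which is legitimate. The paper's proof has the advantage of being self-contained from Definition~\ref{def:mean-width} and entirely independent of Theorem~\ref{thm:equality}; yours is closer in spirit to the classical Urysohn proof via Brunn--Minkowski and makes the extremality of Gaussians structurally transparent.

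One technical point is underplayed. Appealing to Theorem~\ref{thm:equality} does not by itself identify $u'(0^+)$, because $\mst$ is the derivative of $\epsilon\mapsto\int G\star(\epsilon\cdot f)$, whereas your $u$ involves $\lambda\mapsto\int(1-\lambda)\cdot G\star\lambda\cdot f$. Setting $\epsilon=\lambda/(1-\lambda)$ gives $F_\lambda=(1-\lambda)\cdot\bigl[G\star(\epsilon\cdot f)\bigr]$ and hence $\int F_\lambda=(1-\lambda)^{n}\int\bigl[G\star(\epsilon\cdot f)\bigr]^{1-\lambda}$; differentiating at $\lambda=0$ therefore produces, beyond the $\tfrac{n}{2}\mst(f)$ term coming from $\mst$, a contribution $-n$ from the prefactor and $+\tfrac{n}{2}$ from differentiating the exponent (using $\int G\log G=-\tfrac{n}{2}\int G$). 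These do combine to the claimed $u'(0^+)=\tfrac{n}{2}(\ms(f)-1)$, but the reduction requires this extra computation and is not as immediate as your last sentence suggests.
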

\selectlanguage{english}%
Finally, in section \ref{sec:low-mstar-estimate}, we prove a functional
version of the classical low-$\ms$ estimate (see, e.g. \cite{kalton_random_1985}).
All of the necessary background information will be presented there,
so for now we settle on presenting the main result:
\selectlanguage{american}%
\begin{thm}
\label{thm:low-ms-estimate}For every $\varepsilon<M$, every large
enough $n\in\mathbb{N}$, every $f:\mathbb{R}^{n}\to[0,\infty)$ such
that $f(0)=1$ and $M^{\ast}(f)\le1$ and every $0<\lambda<1$ one
can find a subspace $E\hookrightarrow\mathbb{R}^{n}$ such that $\dim E\ge\lambda n$
with the following property: for every $x\in E$ such that $e^{-\varepsilon n}\ge(f\star G)(x)\ge e^{-Mn}$
one have
\[
f(x)\le\left(C(\varepsilon,M)^{\frac{1}{1-\lambda}}\cdot G\right)(x).
\]

\selectlanguage{english}%
In fact, one can take
\[
C(\varepsilon,M)=C\max\left(\frac{1}{\varepsilon},M\right).
\]

\end{thm}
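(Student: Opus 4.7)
The plan is to reduce the theorem, via Fenchel duality, to an application of the classical low-$\ms$ estimate to an appropriate convex body associated with $f$. After taking logarithms, the conclusion $f(x)\le\bigl(C(\epsilon,M)^{1/(1-\lambda)}\cdot G\bigr)(x)$ is equivalent to the pointwise quadratic lower bound
\[
\phi(x)\;\ge\;\frac{|x|^{2}}{2\mu},\qquad \mu:=C(\epsilon,M)^{1/(1-\lambda)},
\]
where $\phi=-\log f$. Hence the task is to produce such a quadratic lower bound for $x$ in a subspace $E$ of dimension at least $\lambda n$, under the constraint that $(f\star G)(x)$ lies in $[e^{-Mn},e^{-\epsilon n}]$. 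By the translation invariance of $\ms$, I may assume that the maximum of $f$ is attained at $0$, so that $\phi(0)=0$ and $\phi\ge 0$.

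The candidate body is
\[
K:=\bigl\{x\in\R^{n}:(f\star G)(x)\ge e^{-Mn}\bigr\}=\{F\le Mn\},\qquad F(x):=-\log(f\star G)(x),
\]
which is convex since $F(x)=\inf_{y}(\phi(y)+|x-y|^{2}/2)$ is the Moreau envelope of $\phi$ and satisfies the Fenchel identity $F=\L(h_{f}+q)$, where $q(\xi)=|\xi|^{2}/2$. The hypothesis $(f\star G)(x)\ge e^{-Mn}$ is exactly $x\in K$. To bound $\ms(K)$, Lagrangian duality applied to the sublevel set gives
\[
h_{K}(\xi)\;\le\;\inf_{s>0}\Bigl[sMn+sh_{f}(\xi/s)+\tfrac{|\xi|^{2}}{2s}\Bigr].
\]
Choosing $s=1$, integrating against $\gamma_{n}$, and using the assumption $\int h_{f}\,d\gamma_{n}\le n/2$ (i.e.\ $\ms(f)\le 1$) together with $\int q\,d\gamma_{n}=n/2$ yields $\int h_{K}\,d\gamma_{n}\le(M+1)n$. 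Since $h_{K}$ is $1$-homogeneous, this integral is comparable to $\sqrt{n}\,\ms(K)$, so I obtain $\ms(K)\lesssim(M+1)\sqrt{n}$.

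With this bound in hand, I would invoke the classical low-$\ms$ estimate applied to $K$ (after symmetrising, e.g.\ replacing $K$ by $\tfrac{1}{2}(K-K)$, which inflates $\ms$ by at most a universal constant) to obtain a subspace $E\hookrightarrow\R^{n}$ with $\dim E\ge\lambda n$ and
\[
K\cap E\;\subseteq\;\frac{c\,\ms(K)}{\sqrt{1-\lambda}}\,D\cap E.
\]
Every $x\in K\cap E$ therefore satisfies $|x|\lesssim(M+1)\sqrt{n/(1-\lambda)}$. If moreover $(f\star G)(x)\le e^{-\epsilon n}$, then $\phi(x)\ge F(x)\ge\epsilon n$, and combining the two estimates gives
\[
\phi(x)\;\ge\;\epsilon n\;\gtrsim\;\frac{\epsilon(1-\lambda)}{(M+1)^{2}}\cdot\frac{|x|^{2}}{2},
\]
which is the desired quadratic lower bound with $\mu\lesssim(M+1)^{2}/(\epsilon(1-\lambda))$.

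The main obstacle is to sharpen the constant so it matches the advertised $C(\epsilon,M)=C\max(1/\epsilon,M)$ rather than the cruder $M^{2}/\epsilon$ dependence produced by the calculation above. I expect the refinement to come from two ingredients: first, optimising the Lagrangian bound on $h_{K}$ over $s$ via AM--GM, balancing the $sMn$ and $|\xi|^{2}/(2s)$ terms so as to replace $\ms(K)\lesssim M\sqrt{n}$ by $\ms(K)\lesssim\sqrt{Mn}$ (at the cost of controlling the residual $sh_{f}(\xi/s)$ term); and second, a case split between the regimes $M\gtrsim 1/\epsilon$ and $M\lesssim 1/\epsilon$, in which the upper bound on $|x|$ coming from low-$\ms$ and the lower bound $F(x)\ge\epsilon n$ trade their relative importance. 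A secondary technical point is checking that $K$ qualifies for the classical low-$\ms$ estimate, which is handled by the symmetrisation together with the inclusion $0\in K$ (since $F(0)\le\phi(0)=0$).
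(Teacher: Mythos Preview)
Your route is genuinely different from the paper's. The paper does \emph{not} apply the classical low-$\ms$ estimate to a sublevel set of $f\star G$. Instead it first proves a functional finite-volume-ratio theorem (Theorem~\ref{thm:volume-ratio}) by applying the classical finite volume ratio estimate to a whole net of sublevel sets $K_{h,\beta}=\{h\ge e^{-\beta n}\}$, $\beta\in[\epsilon,M]$, and then deduces Theorem~\ref{thm:low-ms-estimate} in two lines: set $h=f\star G$, observe $h\ge G$, and bound $V(h)$ via the functional Urysohn inequality (Theorem~\ref{thm:functional-urysohn}) using $\ms(h)=\ms(f)+\ms(G)\le 2$.

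Your single-sublevel-set argument is essentially correct as far as it goes, but it does \emph{not} yield the advertised constant, and your proposed refinements do not close the gap. Working with only $K=\{F\le Mn\}$ gives one radius bound $|x|\lesssim \sqrt{n}\,\ms(K)/\sqrt{1-\lambda}$, which you then compare against the \emph{smallest} available value $\phi(x)\ge\epsilon n$; this unavoidably produces the coupled constant $(M+1)^{2}/\epsilon$ rather than $\max(1/\epsilon,M)$. The decoupling in the paper comes precisely from the net of scales: for each $x$ one uses the level $\beta$ closest to $-\log h(x)/n$, so that both the radius bound and the value bound carry the same $\beta$, and only the worst of $1/\epsilon$ and $M$ survives. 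Your AM--GM optimisation over $s$ is obstructed by the inhomogeneous term $s\,h_{f}(\xi/s)$, whose sign and size you do not control, and a two-case split on $M$ versus $1/\epsilon$ still cannot recover the pointwise adaptivity that the multi-scale argument achieves.

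A minor side remark: your translation step (``assume the maximum of $f$ is at $0$'') conflicts with the hypothesis $f(0)=1$ and is in any case unnecessary. From $f(0)=1$ you already have $\phi(0)=0$, hence $F(0)\le 0$ and $0\in K$; the inequality $\phi\ge F$ holds without any assumption on the location of the maximum.
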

\selectlanguage{english}%
I would like to thank my advisor, Vitali Milman, for raising most
of the questions in this paper, and helping me tremendously in finding
the answers.

\section{Equivalence of the definitions\label{sec:equivalence}}

Our first goal is to prove that $\ms(f)=\mst(f)$ for every log-concave
function $f$. We'll start by proving it under some technical assumptions:
\begin{lem}
\label{lem:equality-restricted}Let $f:\R^{n}\to[0,\infty)$ be a
compactly supported, bounded, log-concave function, and assume that
$f(0)>0$. Then $\ms(f)=\mst(f)$.\end{lem}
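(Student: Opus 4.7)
The plan is to compute $\mst(f)$ directly from its definition, show that after a change of variables the integrand becomes exactly of the form needed for $\ms(f)$, and then use dominated convergence (which is where the technical hypotheses pay off).

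\textbf{Step 1: rewrite the Asplund product.} Using $(\epsilon\cdot f)(y)=f(y/\epsilon)^{\epsilon}$ and substituting $y=\epsilon z$ in the supremum, I get
\[
(G\star(\epsilon\cdot f))(x)=\sup_{z}e^{-|x-\epsilon z|^{2}/2-\epsilon\phi(z)}=e^{-|x|^{2}/2}\cdot e^{\epsilon F_{\epsilon}(x)},
\]
where
\[
F_{\epsilon}(x)=\sup_{z\in\R^{n}}\left[\langle x,z\rangle-\phi(z)-\frac{\epsilon|z|^{2}}{2}\right].
\]
Integrating and using $\int G=(2\pi)^{n/2}$ and $c_{n}=\frac{2}{n(2\pi)^{n/2}}$, the definition of $\mst(f)$ collapses to
\[
\mst(f)=\frac{2}{n}\lim_{\epsilon\to0^{+}}\int_{\R^{n}}\frac{e^{\epsilon F_{\epsilon}(x)}-1}{\epsilon}\,d\gamma_{n}(x).
\]
So everything reduces to identifying this limit with $\int h_{f}\,d\gamma_{n}$.

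\textbf{Step 2: pointwise limit.} As $\epsilon\downarrow0$ the penalty $\epsilon|z|^{2}/2$ decreases, so $F_{\epsilon}$ is monotonically increasing in $-\epsilon$ and $F_{\epsilon}(x)\nearrow\sup_{z}[\langle x,z\rangle-\phi(z)]=(\L\phi)(x)=h_{f}(x)$ for every $x$. Consequently $\frac{e^{\epsilon F_{\epsilon}(x)}-1}{\epsilon}\to h_{f}(x)$ pointwise.

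\textbf{Step 3: dominating function.} This is where I use the hypotheses. If $\operatorname{supp}f\subseteq BD$ and $\|f\|_{\infty}\le e^{M}$, then $\phi\ge-M$ on $BD$ and $\phi\equiv+\infty$ elsewhere, so
\[
F_{\epsilon}(x)\le h_{f}(x)\le B|x|+M,\qquad F_{\epsilon}(x)\ge-\phi(0)=\log f(0)>-\infty.
\]
Thus $|F_{\epsilon}(x)|\le B|x|+M+|\log f(0)|=:H(x)$ uniformly in $\epsilon\in(0,1]$, and the elementary inequality $|e^{t}-1|\le|t|e^{|t|}$ gives
\[
\left|\frac{e^{\epsilon F_{\epsilon}(x)}-1}{\epsilon}\right|\le|F_{\epsilon}(x)|\cdot e^{\epsilon|F_{\epsilon}(x)|}\le H(x)\cdot e^{H(x)}
\]
for all $\epsilon\in(0,1]$. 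Since $H$ grows at most linearly in $|x|$, the right-hand side is in $L^{1}(\gamma_{n})$.

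\textbf{Step 4: conclude.} Dominated convergence then yields
\[
\mst(f)=\frac{2}{n}\int_{\R^{n}}h_{f}(x)\,d\gamma_{n}(x)=\ms(f).
\]
The only delicate point is Step~3: we really need $f$ bounded (to control $\phi$ from below, hence $F_{\epsilon}$ from above via a linear bound in $|x|$) and $f(0)>0$ (to bound $F_{\epsilon}$ from below by a constant, using $z=0$ as a test point in the supremum). Compact support of $f$ is what makes $h_{f}$ grow only linearly at infinity, which in turn keeps the dominating function integrable against the Gaussian. All three assumptions of the lemma are used in exactly this place; the rest is bookkeeping.
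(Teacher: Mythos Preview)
Your argument is correct and follows essentially the same route as the paper: rewrite $G\star(\epsilon\cdot f)$ as $e^{-|x|^{2}/2+\epsilon F_{\epsilon}(x)}$, identify the pointwise limit $F_{\epsilon}\to h_{f}$, and justify dominated convergence using the linear growth of $h_{f}$ (from compact support and boundedness) together with the lower bound $F_{\epsilon}\ge -\phi(0)$ (from $f(0)>0$). The only cosmetic differences are that you obtain $F_{\epsilon}\nearrow h_{f}$ by direct monotonicity rather than citing an external lemma, and you package the dominating function as $H(x)e^{H(x)}$ instead of the paper's separate upper and lower envelopes; both work equally well.
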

\begin{proof}
We'll begin by noticing that
\begin{eqnarray*}
\left[G\star\left(\varepsilon\cdot f\right)\right](x) & = & \sup_{y}G(x-y)\cdot f\left(\frac{y}{\varepsilon}\right)^{\varepsilon}=\sup_{y}\exp\left(-\frac{\left|x-y\right|^{2}}{2}-\epsilon\phi\left(\frac{y}{\epsilon}\right)\right)=\\
 & = & \sup_{y}\exp\left(-\frac{\left|x\right|^{2}}{2}+\left\langle x,y\right\rangle -\frac{\left|y\right|^{2}}{2}-\epsilon\phi\left(\frac{y}{\epsilon}\right)\right)=\\
 & = & e^{-\frac{\left|x\right|^{2}}{2}}\exp\left(\sup_{z}\left(\left\langle x,\epsilon z\right\rangle -\frac{\left|\epsilon z\right|^{2}}{2}-\epsilon\phi(z)\right)\right)=\\
 & = & e^{-\frac{\left|x\right|^{2}}{2}+\epsilon H(x,\epsilon)},
\end{eqnarray*}
 where 
\[
H(x,\epsilon)=\sup_{z}\left(\left\langle x,z\right\rangle -\phi(z)-\epsilon\frac{\left|z\right|^{2}}{2}\right)=\L\left(\phi(x)+\epsilon\frac{\left|x\right|^{2}}{2}\right).
\]
 Since the functions $\phi(x)+\epsilon\frac{\left|x\right|^{2}}{2}$
converge pointwise to $\phi$ as $\epsilon\to0$, it follows that
$H(x,\epsilon)\to\left(\L\phi\right)(x)$ for every $x$ in the interior
of $A=\left\{ x:\ \left(\L\phi\right)(x)<\infty\right\} $ (see for
example lemma 3.2 (3) in \cite{artstein-avidan_santalo_2004}). 

To find $A$, notice the following: since $f$ is bounded there exists
an $M\in\R$ such that $\phi(x)>-M$ for all $x$. Since $f$ is compactly
supported there exists an $R>0$ such that $\phi(x)=\infty$ if $\left|x\right|>R$.
It follows that for every $x$ 
\begin{eqnarray*}
\left(\L\phi\right)(x) & = & \sup_{y}\left(\left\langle x,y\right\rangle -\phi(y)\right)=\sup_{\left|y\right|\le R}\left(\left\langle x,y\right\rangle -\phi(y)\right)\\
 & \le & \sup_{\left|y\right|\le R}\left(\left|x\right|\left|y\right|-\phi(y)\right)\le R\left|x\right|+M<\infty.
\end{eqnarray*}
Therefore $A=\R^{n}$ and $H(x,\epsilon)\to\left(\L\phi\right)(x)$
for all $x$.

We wish to calculate 
\begin{eqnarray*}
\mst(f) & = & c_{n}\lim_{\epsilon\to0^{+}}\frac{\int e^{-\frac{\left|x\right|^{2}}{2}+\epsilon H(x,\epsilon)}dx-\int e^{-\frac{\left|x\right|^{2}}{2}}dx}{\epsilon}=\\
 & = & c_{n}\lim_{\epsilon\to0^{+}}\int\frac{e^{\epsilon H(x,\epsilon)}-1}{\epsilon}\cdot e^{-\frac{\left|x\right|^{2}}{2}}dx,
\end{eqnarray*}
 and to do so we would like to justify the use of the dominated convergence
theorem. Notice that for every fixed $t$, the function $\frac{\exp(\epsilon t)-1}{\epsilon}$
is increasing in $\epsilon$. By substituting $z=0$ we also see that
for every $\epsilon>0$
\[
\left(\L\phi\right)(x)\ge H(x,\epsilon)=\sup_{z}\left(\left\langle x,z\right\rangle -\phi(z)-\epsilon\frac{\left|z\right|^{2}}{2}\right)\ge-\phi(0).
\]
 Therefore on the one hand we get that for every $\epsilon>0$
\[
\frac{e^{\epsilon H(x,\epsilon)}-1}{\epsilon}\ge\frac{e^{-\epsilon\phi(0)}-1}{\epsilon}\ge\lim_{\epsilon\to0^{+}}\frac{e^{-\epsilon\phi(0)}-1}{\epsilon}=-\phi(0)>-\infty,
\]
and on the other hand we get that for every $0<\epsilon<1$
\[
\frac{e^{\epsilon H(x,\epsilon)}-1}{\epsilon}\le\frac{e^{\epsilon\left(\L\phi\right)(x)}-1}{\epsilon}\le e^{\left(\L\phi\right)(x)}-1\le e^{R\left|x\right|+M}.
\]
 Since the functions $-\phi(0)$ and $e^{R\left|x\right|+M}$ are
both integrable with respect to the Gaussian measure the conditions
of the dominated convergence theorem apply, so we can write
\[
\mst(f)=c_{n}\int\lim_{\epsilon\to0^{+}}\frac{e^{\epsilon H(x,\epsilon)}-1}{\epsilon}\cdot e^{-\frac{\left|x\right|^{2}}{2}}dx.
\]

To finish the proof we calculate
\begin{eqnarray*}
\lim_{\epsilon\to0^{+}}\frac{e^{\epsilon H(x,\epsilon)}-1}{\epsilon} & = & \lim_{\epsilon\to0^{+}}\frac{e^{\epsilon H(x,\epsilon)}-1}{\epsilon H(x,\epsilon)}\cdot\lim_{\epsilon\to0^{+}}H(x,\epsilon)\\
 & = & \lim_{\eta\to0^{+}}\frac{e^{\eta}-1}{\eta}\cdot\lim_{\epsilon\to0^{+}}H(x,\epsilon)=\left(\L\phi\right)(x)=h_{f}(x).
\end{eqnarray*}
Therefore 
\[
\mst(f)=c_{n}\int h_{f}(x)e^{-\frac{\left|x\right|^{2}}{2}}dx=\frac{2}{n}\int h_{f}(x)d\gamma_{n}(x)=\ms(f)
\]
 like we wanted.
\end{proof}
In order to prove theorem \ref{thm:equality} in its full generality,
we first need to eliminate one extreme case: usually we think of $\mst(f)$
as the differentiation with respect to $\epsilon$ of $\int G\star\left(\epsilon\cdot f\right)$.
However, this is not always the case, since it is quite possible that
$\int G\star\left(\epsilon\cdot f\right)\not\to\int G$ as $\epsilon\to0^{+}$
(for example this happens for $f(x)=e^{-\left|x\right|}$). The next
lemma characterizes this case completely:
\begin{lem}
\label{lem:extreme-case}The following are equivalent for a log-concave
function $f$:\end{lem}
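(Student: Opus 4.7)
My plan is to analyze $\int G\star(\epsilon\cdot f)\,dx$ as $\epsilon\to 0^{+}$ using the identity established at the very start of the proof of Lemma~\ref{lem:equality-restricted}:
\[
(G\star(\epsilon\cdot f))(x)=\exp\!\left(-\tfrac{|x|^{2}}{2}+\epsilon H(x,\epsilon)\right),\qquad H(x,\epsilon)=\L\!\left(\phi+\epsilon\tfrac{|y|^{2}}{2}\right)(x).
\]
This identity is purely algebraic and holds for any log-concave $f$, with no need for the compactness or boundedness hypotheses of Lemma~\ref{lem:equality-restricted}. Substituting $z=\epsilon y$ inside the supremum rewrites $\epsilon H(x,\epsilon)=\L\psi_{\epsilon}(x)$ with $\psi_{\epsilon}(z)=\tfrac{|z|^{2}}{2}+\epsilon\phi(z/\epsilon)$, which is the object to study.

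The next key step is monotonicity. After the innocuous normalization that $\phi(0)=\inf\phi=0$ (achieved by translating $f$ and rescaling by a positive constant, neither of which affects the hypotheses or conclusion), convexity gives $\phi(z/\epsilon_{2})\le(\epsilon_{1}/\epsilon_{2})\phi(z/\epsilon_{1})$ for $0<\epsilon_{1}<\epsilon_{2}$, hence $\epsilon_{2}\phi(z/\epsilon_{2})\le \epsilon_{1}\phi(z/\epsilon_{1})$. Therefore $\psi_{\epsilon}$ is pointwise non-increasing in $\epsilon$, $\L\psi_{\epsilon}(x)=\epsilon H(x,\epsilon)$ is non-decreasing in $\epsilon$, and $(G\star(\epsilon\cdot f))(x)$ decreases monotonely as $\epsilon\searrow 0$ to $G(x)\,e^{L(x)}$, where $L(x)=\L\psi_{0}(x)$ with $\psi_{0}(z)=\tfrac{|z|^{2}}{2}+\phi_{\infty}(z)$ and $\phi_{\infty}(z)=\lim_{t\to\infty}\phi(tz)/t$ is the recession function of $\phi$. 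Monotone convergence (valid whenever one of the integrals is finite; otherwise the conclusion is immediate) then yields
\[
\lim_{\epsilon\to 0^{+}}\int G\star(\epsilon\cdot f)\,dx=\int G(x)\,e^{L(x)}\,dx.
\]

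Since $L\ge 0$, the limit equals $\int G$ precisely when $L\equiv 0$, and the final task is to translate this into the other equivalent formulations. The supremum defining $L(x)$ is attained at $z=0$ for every $x\in\R^{n}$ if and only if $\psi_{0}(z)\ge\langle x,z\rangle$ for all $x,z$, which --- varying $x$ --- forces $\phi_{\infty}(z)=+\infty$ for every $z\ne 0$, i.e.\ $\phi$ grows strictly super-linearly in every direction. By the standard duality between super-linear growth of a convex function and finiteness of its Legendre transform, this is equivalent to $h_{f}=\L\phi$ being finite everywhere on $\R^{n}$. These four conditions --- convergence $\int G\star(\epsilon\cdot f)\to\int G$, $L\equiv 0$, super-linearity of $\phi$, and finiteness of $h_{f}$ --- are the natural candidates for the list. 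The example $f(x)=e^{-|x|}$ cited just before the lemma is a clean sanity check: there $\phi_{\infty}(z)=|z|$ is finite but nonzero for $z\ne 0$, so $L\not\equiv 0$ and the convergence fails. The main obstacle I anticipate is this final chain of equivalences, which requires some care with the effective domain of $\phi$ when $\phi$ takes the value $+\infty$; the rest of the argument is purely formal once the monotonicity is in hand.
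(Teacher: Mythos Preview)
Your approach is sound and genuinely different from the paper's. The paper treats the two implications separately: for (i)$\Rightarrow$(ii) it shows pointwise convergence $G\star(\epsilon\cdot f)\to G$ and then invokes an external lemma (Lemma~3.2(1) of \cite{artstein-avidan_santalo_2004}) saying that pointwise convergence of log-concave functions forces convergence of their integrals; for $\neg$(i)$\Rightarrow\neg$(ii) it locates a half-space containing $\{\L\phi<\infty\}$, deduces a linear upper bound $\phi(t\theta)\le at+b$ along the normal direction, and converts this into an explicit lower bound on $H(x,\epsilon)$ that survives as $\epsilon\to 0$. Your route --- monotonicity of $\epsilon\mapsto \psi_\epsilon$ after normalizing, plus identification of the limit via the recession function $\phi_\infty$ --- is more unified and avoids the external lemma, replacing it by the monotone convergence theorem; the duality ``$\L\phi$ finite everywhere $\Leftrightarrow$ $\phi_\infty(z)=\infty$ for $z\ne 0$'' is precisely the invariant formulation of the paper's half-space argument.

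Two small caveats. First, the normalization ``$\phi(0)=\inf\phi=0$'' is not always achievable (the infimum need not be attained), but only $\phi(0)=0$ is used in the monotonicity step, and that \emph{is} always achievable by translation and rescaling. Second, monotone convergence for a \emph{decreasing} family needs one finite integral to anchor it; under (i) this follows because super-linearity of $\phi$ forces exponential decay of $G\star(\epsilon\cdot f)$, while under $\neg$(i) you can bypass the issue entirely via the pointwise inequality $G\star(\epsilon\cdot f)\ge G\,e^{\L\psi_0}$. Relatedly, you do not actually need the exact identification $L=\L\psi_0$: under (i) the bound $\epsilon H(x,\epsilon)\le \epsilon\,\L\phi(x)\to 0$ already gives $L\equiv 0$, and under $\neg$(i) the inequality $L\ge \L\psi_0\not\equiv 0$ suffices since $\L\psi_0$ is convex and hence positive on an open set. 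These are details to fill in, not gaps in the idea.
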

\begin{enumerate}
\item $\left(\L\phi\right)(x)<\infty$ for every $x$.
\item $\int G\star\left[\epsilon\cdot f\right]\to\int G$ as $\epsilon\to0^{+}$.\end{enumerate}
\begin{proof}
First, notice that both conditions are translation invariant: if we
define $\tilde{f}=f(x-a)$ then it's easy to check that
\begin{equation}
\left(\L\tilde{\phi}\right)(x)=\left(\L\phi\right)(x)+\left\langle x,a\right\rangle \label{eq:ms-invariance}
\end{equation}
 and 
\begin{equation}
\int\left(G\star\left[\epsilon\cdot\tilde{f}\right]\right)(x)dx=\int\left(G\star\left[\epsilon\cdot f\right]\right)(x-a\epsilon)dx=\int\left(G\star\left[\epsilon\cdot f\right]\right)(x)dx.\label{eq:mst-invariance}
\end{equation}
 Therefore, since we assumed $f\not\equiv0$, we can translate $f$
and assume without loss of generality that $f(0)>0$ (or $\phi(0)<\infty$). 

Assume first that condition (i) holds. In the proof of Lemma \ref{lem:equality-restricted}
we saw that
\[
\left[G\star\left(\varepsilon\cdot f\right)\right](x)=e^{-\frac{\left|x\right|^{2}}{2}+\epsilon H(x,\epsilon)},
\]

and that if $\left(\L\phi\right)(x)<\infty$ for every $x$ then $H(x,\epsilon)\to\left(\L\phi\right)(x)$
as $\epsilon\to0^{+}.$ It follows that 
\[
\lim_{\epsilon\to0^{+}}\left[G\star\left(\varepsilon\cdot f\right)\right](x)=e^{-\frac{\left|x\right|^{2}}{2}+0\cdot\left(\L\phi\right)(x)}=G(x)
\]
 for every $x$. Since the functions $G\star\left(\varepsilon\cdot f\right)$
are log-concave, we get that $\int G\star\left[\epsilon\cdot f\right]\to\int G$
like we wanted (See Lemma 3.2 (1) in \cite{artstein-avidan_santalo_2004}).

Now assume that (i) doesn't hold. Since the set $A=\left\{ x:\ \left(\L\phi\right)(x)<\infty\right\} $
is convex, we must have $A\subseteq H$ for some half-space 
\[
H=\left\{ x:\ \left\langle x,\theta\right\rangle \le a\right\} 
\]
 (here $\theta\in S^{n-1}$ and $a>0$). It follows that for every
$t>0$ 
\[
\phi(t\theta)=\left(\L\L\phi\right)(t\theta)=\sup_{y\in H}\left[\left\langle y,t\theta\right\rangle -\left(\L\phi\right)(y)\right].
\]
 But for every $y$ we know that 
\[
\left(\L\phi\right)(y)=\sup_{z}\left(\left\langle y,z\right\rangle -\phi(z)\right)\ge-\phi(0),
\]
 so 
\[
\phi(t\theta)\le at+b
\]
 where $b=\phi(0)$. Therefore
\begin{eqnarray*}
H(x,\epsilon) & \ge & \sup_{t>0}\left(\left\langle x,t\theta\right\rangle -\phi(t\theta)-\epsilon\frac{\left|t\theta\right|^{2}}{2}\right)\ge\sup_{t>0}\left(t\left\langle x,\theta\right\rangle -at-b-\frac{\epsilon t^{2}}{2}\right)\\
 & = & \frac{\left(\left\langle x,\theta\right\rangle -a\right)^{2}}{2\epsilon}-b,
\end{eqnarray*}
and then 
\[
\int\left[G\star\left(\varepsilon\cdot f\right)\right](x)dx\ge e^{-b\epsilon}\int e^{-\frac{|x|^{2}}{2}+\frac{\left(\left\langle x,\theta\right\rangle -a\right)^{2}}{2}}dx\to\int e^{-\frac{|x|^{2}}{2}+\frac{\left(\left\langle x,\theta\right\rangle -a\right)^{2}}{2}}dx>\int G.
\]
It follows that we can't have convergence in (ii) and we are done.
\end{proof}
The last ingredient we need is a monotone convergence result which
may be interesting on its own right:
\begin{prop}
\label{pro:approximation}Let $f$ be a log-concave function such
that $\left(\L\phi\right)(x)<\infty$ for all $x$. Assume that $\left(f_{k}\right)$
is a sequence of log-concave functions such that for every $x$
\[
f_{1}(x)\le f_{2}(x)\le f_{3}(x)\le\cdots
\]
 and $f_{k}(x)\to f(x).$ Then:\end{prop}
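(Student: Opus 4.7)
I expect the proposition to conclude (a) $h_{f_k}\uparrow h_f$ pointwise, hence $\ms(f_k)\to\ms(f)$, and (b) $\mst(f_k)\to\mst(f)$; its role is to reduce Theorem \ref{thm:equality} to Lemma \ref{lem:equality-restricted} by approximating a general $f$ by nice functions $f_k$ (e.g.\ bounded, compactly supported truncations with $f_k(0)>0$).

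For (a), the hypothesis $f_k\uparrow f$ is equivalent to $\phi_k\downarrow\phi$ pointwise, so for each fixed $x$ the map $y\mapsto\langle x,y\rangle-\phi_k(y)$ increases pointwise in $k$ to $y\mapsto\langle x,y\rangle-\phi(y)$; interchanging $\sup_k$ with $\sup_y$ (always valid for monotone increasing families) gives $h_{f_k}(x)\uparrow h_f(x)$. After translating so that $\phi(0)<\infty$ (permitted by the invariance \eqref{eq:ms-invariance}), each $h_{f_k}$ is uniformly bounded below by $-\phi_k(0)$, which is eventually close to $-\phi(0)$. Monotone convergence applied to $h_{f_k}+C$ then yields $\int h_{f_k}\,d\gamma_n\to\int h_f\,d\gamma_n$, i.e.\ $\ms(f_k)\to\ms(f)$.

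For (b), since the Asplund product is a pointwise supremum, $G\star(\epsilon\cdot f_k)\uparrow G\star(\epsilon\cdot f)$ pointwise, and monotone convergence gives $F_k(\epsilon):=\int G\star(\epsilon\cdot f_k)\uparrow F(\epsilon):=\int G\star(\epsilon\cdot f)$ for every $\epsilon>0$. Since $f_k\le f$ implies $\L\phi_k\le\L\phi<\infty$, Lemma \ref{lem:extreme-case} yields $F_k(0^+)=F(0^+)=\int G$. The main obstacle is to commute $\lim_{k\to\infty}$ with the $\epsilon$-derivative at $0$ that defines $\mst$. The key tool is a log-concavity of $F_k$ (and $F$) in $\epsilon$: starting from
\[
G\star(\epsilon\cdot f)=\bigl[(1-t)\cdot(G\star\epsilon_1\cdot f)\bigr]\star\bigl[t\cdot(G\star\epsilon_2\cdot f)\bigr]
\]
for $\epsilon=(1-t)\epsilon_1+t\epsilon_2$ (which follows from $G=((1-t)\cdot G)\star(t\cdot G)$ and the identity $(\alpha\cdot u)\star(\alpha\cdot v)=\alpha\cdot(u\star v)$), Prékopa--Leindler gives $F(\epsilon)\ge F(\epsilon_1)^{1-t}F(\epsilon_2)^t$, and analogously for $F_k$. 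Log-concavity of $F_k$ combined with $F_k(0)=F(0)$ forces the difference quotient $A_k(\epsilon):=(\log F_k(\epsilon)-\log F_k(0))/\epsilon$ to be decreasing in $\epsilon$, so $\lim_{\epsilon\to0^+}A_k(\epsilon)=\sup_{\epsilon>0}A_k(\epsilon)$, and similarly for the analogously defined $A(\epsilon)$. Since $A_k(\epsilon)\uparrow A(\epsilon)$ pointwise in $\epsilon$, the two suprema commute, yielding $(\log F_k)'(0^+)\uparrow(\log F)'(0^+)$; multiplying through by the common value $F(0)=\int G$ gives $\mst(f_k)\to\mst(f)$.
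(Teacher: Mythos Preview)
Your proposal is correct and follows essentially the same strategy as the paper: both parts use that $\log F$ (and each $\log F_k$) is concave in $\epsilon$, so that the right derivative at $0$ is the supremum of difference quotients and the two suprema $\sup_k\sup_{\epsilon}$ can be interchanged. The minor differences are that you supply self-contained arguments where the paper cites external lemmas---your sup-interchange proof of $h_{f_k}\uparrow h_f$ replaces the paper's appeal to a Legendre-transform convergence lemma, your monotone-convergence argument for $F_k\uparrow F$ replaces the paper's $\delta$-argument, and you sketch the Pr\'ekopa--Leindler proof of log-concavity of $F$ that the paper simply attributes to \cite{klartag_geometry_2005}.
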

\begin{enumerate}
\item $\ms(f_{k})\to\ms(f)$.
\item $\mst(f_{k})\to\mst(f)$.\end{enumerate}
\begin{proof}
(i) By our assumption $\phi_{k}(x)\to\phi(x)$ pointwise. Since we
assumed that$\left(\L\phi\right)(x)<\infty$ it follows that $\L\phi_{k}$
converges pointwise to $\L\phi$ (again, lemma 3.2 (3) in \cite{artstein-avidan_santalo_2004}).
Now one can apply the monotone convergence theorem and get that
\[
\ms(f_{k})=\frac{2}{n}\int\left(\L\phi_{k}\right)(x)d\gamma_{n}(x)\to\frac{2}{n}\int\left(\L\phi\right)(x)d\gamma_{n}(x)=\ms(f),
\]
like we wanted. 

(ii) For $\epsilon>0$ define
\[
F_{k}(\epsilon)=\int G\star\left[\epsilon\cdot f_{k}\right]
\]
 and 
\[
F(\epsilon)=\int G\star\left[\epsilon\cdot f\right].
\]
 It was observed already in \cite{klartag_geometry_2005} that $F_{k}$
and $F$ are log-concave. By our assumption on $f$ and Lemma \ref{lem:extreme-case},
$F_{k}$ and $F$ will be (right) continuous at $\epsilon=0$ if we
define $F_{k}(0)=F(0)=\int G$. We would first like the show that
$F_{k}$ converges pointwise to $F$. Because all of the functions
involved are log-concave, it is enough to prove that for a fixed $\epsilon>0$
and $x\in\R^{n}$
\[
\left(G\star\left[\epsilon\cdot f_{k}\right]\right)(x)\to\left(G\star\left[\epsilon\cdot f\right]\right)(x)
\]
(lemma 3.2 (1) in \cite{artstein-avidan_santalo_2004}). Since $f_{k}\le f$
for all $k$ it is obvious that $\mbox{\ensuremath{\lim}}\left(G\star\left[\epsilon\cdot f_{k}\right]\right)(x)\le\left(G\star\left[\epsilon\cdot f\right]\right)(x)$.
For the other direction, choose $\delta>0$. There exists $y_{\delta}\in\R^{n}$
such that 
\begin{eqnarray*}
\left(G\star\left[\epsilon\cdot f\right]\right)(x) & \le & G(x-y_{\delta})f\left(\frac{y_{\delta}}{\epsilon}\right)^{\epsilon}+\delta\\
 & = & \lim_{k\to\infty}G(x-y_{\delta})f_{k}\left(\frac{y_{\delta}}{\epsilon}\right)^{\epsilon}+\delta\le\lim_{k\to\infty}\left(G\star\left[\epsilon\cdot f_{k}\right]\right)(x)+\delta.
\end{eqnarray*}
 Finally taking $\delta\to0$ we obtain the result.

We are interested in calculating $\mst(f)=c_{n}F^{\prime}(0)$ (the
derivative here is right-derivative, but it won't matter anywhere
in the proof). Since $F$ is log-concave, it will be easier for us
to compute $\left(\log F\right)^{\prime}(0)=\frac{F^{\prime}(0)}{\int G}$.
Indeed, notice that
\begin{eqnarray*}
\left(\log F\right)^{\prime}(0) & = & \sup_{\epsilon>0}\frac{\left(\log F\right)(\epsilon)-\left(\log F\right)(0)}{\epsilon}=\sup_{\epsilon>0}\sup_{k}\frac{\left(\log F_{k}\right)(\epsilon)-\left(\log F_{k}\right)(0)}{\epsilon}=\\
 & = & \sup_{k}\sup_{\epsilon>0}\frac{\left(\log F_{k}\right)(\epsilon)-\left(\log F_{k}\right)(0)}{\epsilon}=\sup_{k}\left(\log F_{k}\right)^{\prime}(0)=\sup_{k}\frac{F_{k}^{\prime}(0)}{\int G}.
\end{eqnarray*}
 Since the sequence $F_{k}^{\prime}(0)$ is monotone increasing we
get that 
\[
\mst(f)=c_{n}\int G\cdot\left(\log F\right)^{\prime}(0)=\lim_{k\to\infty}c_{n}F_{k}^{\prime}(0)=\lim_{k\to\infty}\mst(f_{k})
\]
 like we wanted.
\end{proof}
Now that we have all of the ingredients, it is fairly straightforward
to prove the main result of this section:

\begin{reptheorem}{thm:equality}

$\ms(f)=\mst(f)$ for every log-concave function $f$.

\end{reptheorem}
\begin{proof}
Let $f:\R^{n}\to[0,\infty)$ be a log-concave function. By equations
(\ref{eq:ms-invariance}) and (\ref{eq:mst-invariance}) we see that
both $\ms$ and $\mst$ are translation invariant. Hence we can translate
$f$ and assume without loss of generality that $f(0)>0$. 

If there exists a point $x_{0}$ such that $\left(\L\phi\right)(x_{0})=\infty$,
then $\L\phi=\infty$ on an entire half-space, so $\ms(f)=\infty$.
By Lemma \ref{lem:extreme-case} we know that $\int G\star\left[\epsilon\cdot f\right]\not\to\int G$,
and then $\mst(f)=\infty$ as well and we get an equality.

If $\left(\L\phi\right)(x)<\infty$ for all $x$ we define a sequence
of functions $\left\{ f_{k}\right\} _{k=1}^{\infty}$ as 
\[
f_{k}=\min(f\cdot\o_{\left|x\right|\le k},k).
\]
 Every $f_{k}$ is log-concave, compactly supported, bounded and satisfies
\[
f_{k}(0)=\min(f(0),k)>0.
\]
Therefore we can apply lemma \ref{lem:equality-restricted} and conclude
that $\ms(f_{k})=\mst(f_{k})$. Since the sequence $\left\{ f_{k}\right\} $
is monotone and converges pointwise to $f$ we can apply proposition
\ref{pro:approximation} and get that
\[
\ms(f)=\lim_{k\to\infty}\ms(f_{k})=\lim_{k\to\infty}\mst(f_{k})=\mst(f),
\]
so we are done. 
\end{proof}

\section{Properties of the mean width\label{sec:properties}}

We start by listing some basic properties of the mean width, all of
which are almost immediate from the definition:
\begin{prop}
\end{prop}
\begin{enumerate}
\item $\ms(f)>-\infty$ for every log-concave function $f$.
\item If there exists a point $x_{0}\in\R^{n}$ such that $f(x_{0})\ge1$,
then $\ms(f)\ge0$.
\item $\ms$ is linear: for every log-concave functions $f,g$ and every
$\lambda>0$ 
\[
\ms\left(\left(\lambda\cdot f\right)\star g\right)=\lambda\ms(f)+\ms(g).
\]

\item $\ms$ in rotation and translation invariant.
\item If $f$ is a log-concave function and $a>0$ define $f_{a}(x)=a\cdot f(x)$.
Then 
\[
\ms(f_{a})=\ms(f)+\frac{2}{n}\log a
\]
\end{enumerate}
\begin{proof}
For (i), remember we explicitly assumed that $f\not\equiv0$, so there
exists a point $x_{0}\in\R^{n}$ such that $f(x_{0})>0$. Hence
\[
h_{f}(x)=\sup_{y}\left(\left\langle x,y\right\rangle -\phi(y)\right)\ge\left\langle x,x_{0}\right\rangle -\phi(x_{0}),
\]
 and then 
\[
\ms(f)=\frac{2}{n}\int h_{f}(x)d\gamma_{n}(x)\ge\frac{2}{n}\left[\int\left\langle x,x_{0}\right\rangle d\gamma_{n}(x)-\phi(x_{0})\right]=-\frac{2}{n}\phi(x_{0})>-\infty
\]
 like we wanted. For (ii) we know that $\phi(x_{0})<0$, and we simply
repeat the argument.

(iii) follows from the easily verified fact that the support function
has the same property. In other words, if $f,g$ are log-concave and
$\lambda>0$ then
\[
h_{\left(\lambda\cdot f\right)\star g}(x)=\lambda h_{f}(x)+h_{g}(x)
\]
 for every $x$. Integrating over $x$ we get the result.

For (iv), we already saw in the proof of theorem \ref{thm:equality}
that $\ms$ is translation invariant. For rotation invariance, notice
that if $u$ is any linear operator then
\begin{eqnarray*}
h_{f\circ u}(x) & = & \sup_{y}\left[\left\langle x,y\right\rangle -\phi(u\left(y\right))\right]=\sup_{z}\left[\left\langle x,u^{-1}z\right\rangle -\phi(z)\right]=\\
 & = & \sup_{z}\left[\left\langle \left(u^{-1}\right)^{\ast}x,z\right\rangle -\phi(z)\right]=h_{f}\left(\left(u^{-1}\right)^{\ast}x\right).
\end{eqnarray*}
In particular if $u$ is orthogonal then $h_{f\circ u}(x)=h_{f}(ux)$,
and the result follows since $\gamma_{n}$ is rotation invariant.

Finally for (v), notice that $\phi_{a}=\phi-\log a$. Therefore
\[
h_{f_{a}}=\L\left(\phi-\log a\right)=\L\phi+\log a=h_{f}+\log a,
\]
 and the result follows.\end{proof}
\begin{rem*}
A comment in \cite{klartag_geometry_2005} states that $\ms(f)$ is
always positive. This is not the case: from (v) we see that if $f$
is any log-concave function with $\ms(f)<\infty$ then $\ms(f_{a})\to-\infty$
as $a\to0^{+}$. (ii) gives one condition that guarantees that $\ms(f)\ge0$,
and another condition can be deduced from theorem \ref{thm:functional-urysohn}.
\end{rem*}
We now turn our focus to the proof of theorem \ref{thm:functional-urysohn},
the functional Urysohn inequality. The main ingredient of the proof
is the functional Santaló inequality, proven in \cite{ball_isometric_1986}
for the even case and in \cite{artstein-avidan_santalo_2004} for
the general case. The result can be stated as follows:
\begin{prop*}
Let $\phi:\R^{n}\to(-\infty,\infty]$ be any function such that $0<\int e^{-\phi}<\infty$.
Then, there exists $x_{0}\in\R^{n}$ such that for $\tilde{\phi}(x)=\phi(x-x_{0})$
one has
\[
\int e^{-\tilde{\phi}}\cdot\int e^{-\L\tilde{\phi}}\le(2\pi)^{n}
\]

\end{prop*}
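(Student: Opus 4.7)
The plan is to reduce the functional Santaló inequality to the classical symmetric Blaschke--Santaló inequality $|K|\cdot|K^{\circ}|\le|D|^{2}$ for centrally symmetric convex bodies, proceeding in three stages: (1) the even case, via a lifting of log-concave functions to convex bodies; (2) selection of the Santaló translation point $x_{0}$ for general $\phi$; (3) a functional Steiner-type symmetrization reducing the general case to the even case.

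\emph{Stage 1: even case, $x_{0}=0$.} Assume $\phi$ is even, and normalize so that $f(0)=1$ for $f=e^{-\phi}$. Following K.~Ball, I would associate to $f$ the body
\[
K_{f}=\bigl\{(x,y)\in\R^{n}\times\R^{n}:|y|^{n}e^{\phi(x/|y|)}\le 1\bigr\}
\]
(with the natural convention at $y=0$). Log-concavity of $f$ combined with evenness of $\phi$ yield convexity and central symmetry of $K_{f}\subset\R^{2n}$. Polar coordinates in the $y$-variable express $|K_{f}|$ as a dimensional constant times $\int e^{-\phi}$, and a pointwise Legendre-duality computation identifies $K_{f}^{\circ}$ with the analogous Ball body attached to $e^{-\L\phi}$. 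Feeding this into $|K_{f}|\cdot|K_{f}^{\circ}|\le|D_{2n}|^{2}$ and unwinding the constants produces the right-hand side $(2\pi)^{n}$; the normalization is forced because the extremizers of the symmetric geometric Santaló are ellipsoids, which correspond under the lifting to centered Gaussians $G$, for which $\int G\cdot\int G=(2\pi)^{n}$.

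\emph{Stage 2: Santaló point.} For general $\phi$, define
\[
\Psi(y)=\int_{\R^{n}}e^{-\L\phi(x)-\langle x,y\rangle}\,dx.
\]
The translation rule $\L(\phi(\cdot-y))(x)=\L\phi(x)+\langle x,y\rangle$ identifies $\Psi(y)$ with $\int e^{-\L(\phi(\cdot-y))}$. As a Laplace transform, $\Psi$ is smooth and strictly convex on the interior of its effective domain, and the hypothesis $0<\int e^{-\phi}<\infty$ forces $\Psi\to\infty$ at the boundary of that domain. Hence a unique minimizer $x_{0}$ exists, and the first-order condition $\nabla\Psi(x_{0})=0$ says exactly that the barycenter of $e^{-\L\tilde{\phi}(x)}\,dx$, with $\tilde{\phi}(x)=\phi(x-x_{0})$, lies at the origin. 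Translation invariance of Lebesgue measure gives $\int e^{-\tilde{\phi}}=\int e^{-\phi}$, so what is left is to bound the Santaló product under the added barycenter assumption.

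\emph{Stage 3: reduction to the even case, and main obstacle.} With the barycenter of $e^{-\L\tilde{\phi}}$ at the origin, I would apply a one-dimensional functional Steiner-type symmetrization to $\tilde{\phi}$ coordinate by coordinate. The symmetrization must (a) preserve $\int e^{-\tilde{\phi}}$ and (b) not decrease $\int e^{-\L\tilde{\phi}}$; the barycenter condition is precisely what makes (b) work, via a one-dimensional Pr\'ekopa--Leindler / Meyer--Pajor reflection lemma. Iterating over an orthonormal basis, together with a compactness argument to extract a limit that is genuinely even in all directions, produces an even log-concave function to which Stage 1 applies, yielding the bound $(2\pi)^{n}$. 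This is the main obstacle: one must pin down a symmetrization that couples $e^{-\tilde{\phi}}$ and $e^{-\L\tilde{\phi}}$ tightly enough for the barycenter hypothesis to force monotonicity of the Santaló product, and then control the limit of the iteration. Verifying this monotonicity and passing to the limit is the technical heart of the Artstein-Avidan--Klartag--Milman argument, while Stages 1 and 2 amount to clever but essentially bounded computations.
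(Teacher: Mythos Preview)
The paper does not prove this proposition at all; it is quoted as a black-box input, with the even case attributed to Ball and the general case to Artstein-Avidan--Klartag--Milman. So there is no ``paper's own proof'' to compare against---your proposal is in effect a sketch of the arguments in those cited references, and at that level it is structurally faithful: Ball's lifting for the even case, a convexity/coercivity argument to locate the Santal\'o translation, and a Steiner-type symmetrization (monotone for the Santal\'o product under the barycenter constraint) to reduce general $\phi$ to even $\phi$.

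One concrete slip in Stage~1: your body $K_f=\{(x,y)\in\R^{n}\times\R^{n}:|y|^{n}e^{\phi(x/|y|)}\le 1\}$ does not have volume proportional to $\int e^{-\phi}$. Substituting $x=|y|z$ and then integrating in polar coordinates in $y$ gives
\[
|K_f|=\frac{|S^{n-1}|}{2n}\int_{\R^{n}}e^{-2\phi(z)}\,dz,
\]
i.e.\ a multiple of $\int f^{2}$ rather than $\int f$. The usual Ball lifting lives in $\R^{n+1}$ (or one replaces the exponent $n$ on $|y|$ by $2n$), after which the polar identification with $e^{-\L\phi}$ and the constants line up. This is a normalization glitch, not a structural gap; the three-stage plan is the correct outline of the cited proofs.
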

We will also need the following corollary of Jensen's inequality,
sometimes known a\foreignlanguage{american}{s Shannon's inequality:}
\selectlanguage{american}%
\begin{prop*}
For measurable functions $p,q:\R^{n}\to\R$, assume the following:
\begin{enumerate}
\item $p(x)>0$ for all $x\in\R^{n}$ and $\int_{\R^{n}}p(x)dx=1$
\item $q(x)\ge0$ for all $x\in\R^{n}$
\end{enumerate}
Then 
\[
\int p\log\frac{1}{p}\le\int p\log\frac{1}{q}+\log\int q,
\]
 with equality if and only if $q(x)=\alpha\cdot p(x)$ almost everywhere.
\end{prop*}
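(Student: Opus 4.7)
The plan is a direct reduction to Jensen's inequality for the concave function $\log$. First I would rearrange the target inequality. Since $p(x)>0$ everywhere, the claim
\[
\int p\log\frac{1}{p}\le\int p\log\frac{1}{q}+\log\int q
\]
is equivalent, after subtracting $\int p\log(1/q)$ from both sides, to
\[
\int_{\R^{n}} p(x)\log\frac{q(x)}{p(x)}\,dx \le \log\int_{\R^{n}} q(x)\,dx.
\]
On the set $\{q=0\}$ I will adopt the convention $p\log(q/p)=-\infty$, so the left-hand side is well defined in $[-\infty,+\infty)$.

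The main step is to apply Jensen's inequality with respect to the probability measure $d\mu(x)=p(x)\,dx$ on $\R^n$ and the non-negative measurable function $F(x)=q(x)/p(x)$ (well defined since $p>0$). Concavity of $\log$ gives
\[
\int \log F\,d\mu \le \log \int F\,d\mu,
\]
which in the original variables reads
\[
\int p(x)\log\frac{q(x)}{p(x)}\,dx \le \log\int \frac{q(x)}{p(x)}\,p(x)\,dx = \log\int q(x)\,dx,
\]
i.e.\ exactly the rearranged inequality.

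For the equality characterization, strict concavity of $\log$ forces Jensen's bound to be an equality only when $F$ is $\mu$-almost-everywhere constant. Since $p>0$ everywhere, $\mu$-a.e.\ coincides with Lebesgue-a.e., hence $q(x)/p(x)=\alpha$ a.e., i.e.\ $q=\alpha p$ almost everywhere (with $\alpha=\int q$). The only subtlety is the case where $q$ vanishes on a set of positive Lebesgue measure: then $\int p\log(1/q)=+\infty$, so the original inequality is trivially satisfied, and equality fails unless one allows the degenerate reading $q\equiv 0$ a.e. This bookkeeping around $\log 0$ is the only point requiring care; otherwise the argument is a one-line application of Jensen, and I do not anticipate a deeper obstacle.
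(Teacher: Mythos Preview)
Your proof is correct and matches the approach the paper itself indicates: the paper does not prove this proposition in-line but calls it ``a corollary of Jensen's inequality'' and refers to a textbook, which is exactly the reduction you carry out. The only minor point is that the paper's statement implicitly allows $\int q=\infty$ or $\int q=0$, and you might mention in one line that these cases make the right-hand side $+\infty$ or force $q=0$ a.e., respectively, so the inequality and its equality clause hold trivially.
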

\selectlanguage{english}%
For a proof of this result see, e.g. theorem B.1 in \cite{mceliece_theory_2002}
(the result is stated for $n=1$, but the proof is completely general).
Using these propositions we can now prove:

\begin{reptheorem}{thm:functional-urysohn}

For any log-concave function $f$ 
\[
M^{*}(f)\ge2\log\left(\frac{\int f}{\int G}\right)^{\frac{1}{n}}+1,
\]
 with equality if and only if $\int f=\infty$ or $f(x)=Ce^{-\frac{\left|x-a\right|^{2}}{2}}$
for some $C>0$ and $a\in\R^{n}$. 

\end{reptheorem}
\begin{proof}
If $\int f=0$ there is nothing to prove. Assume first that $\int f<\infty$.
We start by applying Shannon's inequality with $p=\frac{d\gamma_{n}}{dx}=\left(2\pi\right)^{-\frac{n}{2}}e^{-\frac{\left|x\right|^{2}}{2}}$
and $q=e^{-h_{f}}$:
\begin{eqnarray*}
\ms(f) & = & \frac{2}{n}\int h_{f}(x)d\gamma_{n}(x)=\frac{2}{n}\int p\log\frac{1}{q}\ge\frac{2}{n}\left[\int p\log\frac{1}{p}-\log\int q\right]\\
 & = & \frac{2}{n}\left[\int\left(\frac{\left|x\right|^{2}}{2}+\frac{n}{2}\log(2\pi)\right)d\gamma_{n}(x)-\log\left(\int e^{-h_{f}}\right)\right]\\
 & = & \int x_{1}^{2}d\gamma_{n}(x)+\log(2\pi)-\frac{2}{n}\log\left(\int e^{-h_{f}}\right)\\
 & = & 1+\log(2\pi)-\frac{2}{n}\log\left(\int e^{-h_{f}}\right).
\end{eqnarray*}

Now we wish to use the functional Santaló inequality. Since the inequality
we need to prove is translation invariant, we can translate $f$ and
assume without loss of generality that $x_{0}=0$. Hence we get
\[
\int f\cdot\int e^{-h_{f}}\le\left(2\pi\right)^{n}.
\]
 Substituting back it follows that
\begin{eqnarray*}
\ms(f) & \ge & 1+\log(2\pi)-\frac{2}{n}\log\left(\frac{\left(2\pi\right)^{n}}{\int f}\right)\\
 & = & 1+\frac{2}{n}\log\left(\frac{\int f}{\int G}\right),
\end{eqnarray*}
which is what we wanted to prove. 

From the proof we also see that equality in Urysohn inequality implies
equality in Shannon's inequality. Hence for equality we must have
$q(x)=\alpha\cdot p(x)$ for some constant $\alpha$, or $h_{f}=\frac{\left|x\right|^{2}}{2}+a$
for some constant $a$. This implies that
\[
\phi=\L\left(\L\phi\right)=\L\left(\frac{\left|x\right|^{2}}{2}+a\right)=\frac{\left|x\right|^{2}}{2}-a,
\]
so $f(x)=Ce^{-\frac{\left|x\right|^{2}}{2}}$ for $C=e^{-a}$. Since
we allowed translations of $f$ in the proof, the general equality
case is $f(x)=Ce^{-\frac{\left|x-a\right|^{2}}{2}}$ for some $C>0$
and $a\in\R^{n}$.

Finally, we need to handle the case that $\int f=\infty$. Like in
theorem \ref{thm:equality}, we choose a sequence of compactly supported,
bounded functions $f_{k}$ such that $f_{k}\uparrow f$. It follows
that
\[
\ms(f)\ge\ms(f_{k})\ge2\log\left(\frac{\int f_{k}}{\int G}\right)^{\frac{1}{n}}+1\stackrel{k\to\infty}{\longrightarrow}\infty,
\]
so $\ms(f)=\infty$ and we are done.
\end{proof}

\section{Low-$\ms$ estimate \label{sec:low-mstar-estimate}}

Remember the following important result, known as the low-$\ms$ estimate:
\begin{thm*}
There exists a function $f:(0,1)\to\mathbb{R}^{+}$ such that for
every convex body $K\subseteq\mathbb{R}^{n}$ and every $\lambda\in(0,1)$
one can find a subspace $E\hookrightarrow\mathbb{R}^{n}$ such that
$\dim E\ge\lambda n$ and 
\[
K\cap E\subseteq f(\lambda)\cdot M^{\ast}(K)\cdot D_{E}
\]

\end{thm*}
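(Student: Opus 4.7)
I would follow the Gaussian approach to the low-$\ms$ estimate via Gordon's escape-through-the-mesh argument. Assume first that $0\in K$ (the general case is recovered by a translation, at the cost of a constant factor absorbed into $f(\lambda)$). Set $k=\lceil\lambda n\rceil$ and parameterize a random $k$-dimensional subspace of $\R^{n}$ as $E=\ker G$, where $G:\R^{n}\to\R^{n-k}$ has i.i.d.\ standard Gaussian entries; such $E$ is uniformly distributed on the Grassmannian. Denote the Gaussian mean width of $T\subseteq\R^{n}$ by $w(T)=\mathbb{E}\sup_{x\in T}\langle g,x\rangle$ with $g\sim N(0,I_{n})$.

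The first step is to compare $w$ with $\ms$. Polar decomposition $g=\|g\|\theta$, with $\|g\|$ and $\theta\in S^{n-1}$ independent, gives the identity
\[
w(K)\;=\;\mathbb{E}\|g\|\cdot\int_{S^{n-1}}h_{K}(\theta)\,d\sigma(\theta)\;=\;a_{n}\,\ms(K),\qquad a_{n}=\mathbb{E}\|g\|\sim\sqrt{n}.
\]
The second step is Gordon's escape-through-the-mesh lemma: for any closed $S\subseteq S^{n-1}$, if $w(S)<\sqrt{n-k}$ then $S\cap\ker G=\emptyset$ with positive probability (indeed, with probability tending to one as the gap grows). I would apply this with $S=(K\cap rS^{n-1})/r\subseteq S^{n-1}$ for a radius $r$ to be chosen, and use monotonicity and $1$-homogeneity of $w$ to bound $w(S)\le w(K)/r$.

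Equating $w(K)/r=\sqrt{n-k}$ and substituting the identity from step 1 gives the threshold
\[
r\;\sim\;\frac{w(K)}{\sqrt{n-k}}\;\sim\;\frac{\sqrt{n}\,\ms(K)}{\sqrt{n-k}}\;=\;\frac{\ms(K)}{\sqrt{1-\lambda}}.
\]
Taking $r$ slightly above this threshold, with positive probability $E=\ker G$ has dimension $\ge\lambda n$ and simultaneously $K\cap rS^{n-1}\cap E=\emptyset$. Since $K$ is convex and contains the origin, the latter forces $K\cap E\subseteq r\,D_{E}$, which is the desired inclusion with $f(\lambda)=C/\sqrt{1-\lambda}$ for a universal constant $C$.

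The main obstacle is Gordon's min-max Gaussian comparison inequality (a Slepian-type theorem) which underlies the escape-through-the-mesh lemma; I would invoke it as a black-box result from the theory of Gaussian processes. Two secondary technicalities are worth mentioning: (a) the asymptotic equivalence $a_{n}\sim\sqrt{n}$ is classical and encodes precisely the normalization difference between the Haar measure on $S^{n-1}$ and the Gaussian measure (it affects only the constant $C$); and (b) the reduction to the case $0\in K$, which is handled by translating $K$ to place its centroid or Santal\'o point at the origin and noting that the resulting distortion is controllable and independent of $n$.
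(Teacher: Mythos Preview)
The paper does not prove this classical theorem; it is quoted as background with references, and the only comment on method is that ``the original proof of the low-$\ms$ estimate passes through another result, known as the finite volume ratio estimate.'' That two-step scheme (finite volume ratio, then low-$\ms$) is precisely the template the paper then imitates for its functional analogues, Theorems~\ref{thm:volume-ratio} and~\ref{thm:low-ms-estimate}.

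Your route via Gordon's escape-through-the-mesh is a correct and well-known alternative (essentially the Pajor--Tomczak-Jaegermann/Gordon argument cited in the paper's list of later proofs). Compared with the volume-ratio approach the paper alludes to, Gordon's argument is more direct and delivers the sharper dependence $f(\lambda)\asymp C(1-\lambda)^{-1/2}$, whereas Milman's original finite-volume-ratio proof gives only $f(\lambda)=C^{1/(1-\lambda)}$. Your sketch is essentially complete; the one point worth tightening is the reduction to $0\in K$. Since $\ms$ is translation invariant but the conclusion $K\cap E\subseteq rD_E$ (with $D_E$ centered at the origin) is not, translating $K$ does not simply cost ``a constant factor absorbed into $f(\lambda)$.'' The standard way out is to assume $0\in K$ (or $K=-K$) from the start, which is implicit in the paper's formulation and in the literature it cites; your Gordon argument then goes through cleanly without any translation step.
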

This result was first proven by V. Milman in \cite{milman_almost_1985}
with $f(\lambda)=C^{\frac{1}{1-\lambda}}$ for some universal constant
$C$. Many other proofs were later found, most of which give sharper
bounds on $f(\lambda)$ as $\lambda\to1^{-}$ (an incomplete list
includes \cite{kalton_random_1985}, \cite{pajor_subspaces_1986},
and \cite{lindenstrauss_milmans_1988}). 

The original proof of the low-$\ms$ estimate passes through another
result, known as the finite volume ratio estimate. Remember that if
$K$ is a convex body, then the volume ratio of $K$ is 
\[
V(K)=\inf\left(\frac{\left|K\right|}{\left|\mathcal{E}\right|}\right)^{\frac{1}{n}},
\]
 where the infimum is over all ellipsoids $\mathcal{E}$ such that
$\mathcal{E}\subseteq K$. In order to state the finite volume ratio
estimate it is convenient to assume without loss of generality that
this maximizing ellipsoid is the euclidean ball $D$. The finite volume
ratio estimate (\cite{szarek_kashins_1978,szarek_nearly_1980}) then
reads:
\begin{thm*}
Assume $D\subseteq K$ and $\left(\frac{\left|K\right|}{\left|D\right|}\right)^{\frac{1}{n}}\le A$.
Then for every $\lambda\in(0,1)$ one can find a subspace $E\hookrightarrow\mathbb{R}^{n}$
such that $\dim E\ge\lambda n$ and 
\[
K\cap E\subseteq(C\cdot A)^{\frac{1}{1-\lambda}}\cdot\left(D\cap E\right)
\]
 for some universal constant $C$. In fact, a random subspace will
have the desired property with probability $\ge1-2^{-n}$. 
\end{thm*}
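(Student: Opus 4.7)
My plan is to prove the finite volume ratio estimate by dualizing the target inclusion to a sup bound on the radius function, then combining Haar-measure Fubini with Markov's inequality to obtain an averaged control that can be upgraded, via a local Lipschitz argument, to a uniform control.

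First, I would rewrite the target in spherical terms. Let $\|\cdot\|_K$ be the Minkowski gauge of $K$ and set $r_K(\theta) = \|\theta\|_K^{-1}$ for $\theta\in S^{n-1}$. Since $D\subseteq K$, the gauge satisfies $\|x\|_K \le |x|$, so $r_K \ge 1$ on the sphere. The inclusion $K\cap E \subseteq R\cdot(D\cap E)$ is equivalent to $\sup_{\theta\in S^{n-1}\cap E} r_K(\theta) \le R$, and polar coordinates identify the volume ratio assumption with the integral bound
\[
\int_{S^{n-1}} r_K(\theta)^n \, d\sigma(\theta) \;=\; \frac{|K|}{|D|} \;\le\; A^n.
\]

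Next, I would integrate over the Grassmannian $G_{n,k}$. Using that Haar measure on $S^{n-1}$ is recovered as the average of the Haar measures $\sigma_E$ on $S^{n-1}\cap E$ (a consequence of the fact that $U\theta$ is $\sigma$-distributed when $U$ is Haar on $O(n)$ and $\theta$ is fixed), one obtains
\[
\mathbb{E}_E \int_{S^{n-1}\cap E} r_K^n \, d\sigma_E \;=\; \int_{S^{n-1}} r_K^n \, d\sigma \;\le\; A^n,
\]
and Markov's inequality yields that with probability at least $1-2^{-n}$ over a random $k$-dimensional subspace $E$, one has $\int_{S^{n-1}\cap E} r_K^n \, d\sigma_E \le (2A)^n$.

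The main obstacle is upgrading this $L^n$-type bound on $E$ to a uniform bound, and this is where the condition $D\subseteq K$ is crucially used. The triangle inequality for the gauge gives $\|\theta\|_K \le \|\theta_0\|_K + |\theta-\theta_0|$, so $r_K(\theta) \ge r_K(\theta_0)/2$ on the Euclidean ball of radius $1/r_K(\theta_0)$ around $\theta_0$. On $S^{n-1}\cap E \cong S^{k-1}$, this cap has normalized measure at least $c\cdot r_K(\theta_0)^{-(k-1)}$ (absorbing harmless subpolynomial factors in $k$). Applying this at a near-maximizer $\theta_0$ with $R := r_K(\theta_0)$, one has
\[
\int_{S^{n-1}\cap E} r_K^n \, d\sigma_E \;\ge\; (R/2)^n \cdot c\, R^{-(k-1)} \;=\; c\, R^{n-k+1}/2^n.
\]
Combining with the upper bound $(2A)^n$ gives $R^{n-k+1} \le (C_0 A)^n$, and taking $(n-k+1)$-th roots with $k=\lceil \lambda n\rceil$ produces the claimed bound $R \le (CA)^{1/(1-\lambda)}$ for a universal constant $C$, for all $n$ sufficiently large (and trivially absorbable into $C$ otherwise).
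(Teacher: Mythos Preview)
The paper does not give its own proof of this statement; it quotes it as the classical finite volume ratio estimate, with citations to Szarek and Szarek--Tomczak-Jaegermann, and then uses it as a black box inside the proof of Theorem~\ref{thm:volume-ratio}. There is therefore nothing in the paper to compare your argument against.

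That said, your proposal is correct and is exactly the standard argument from those references: express the volume ratio hypothesis via polar coordinates as an $L^n$ bound on the radial function $r_K$ over $S^{n-1}$; use rotation invariance (Fubini over the Grassmannian) together with Markov's inequality to find, with probability at least $1-2^{-n}$, a $k$-dimensional subspace $E$ on which $\int_{S^{n-1}\cap E} r_K^n\,d\sigma_E\le(2A)^n$; and finally use the gauge triangle inequality (which is where $D\subseteq K$ enters, giving $\|\theta\|_K\le\|\theta_0\|_K+|\theta-\theta_0|$) to show that $r_K\ge R/2$ on a cap of radius $1/R$ around a maximizer, whose $\sigma_E$-measure is at least $(c/R)^{k-1}$. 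Comparing the resulting lower bound $(R/2)^n\cdot(c/R)^{k-1}$ with $(2A)^n$ yields $R\le(CA)^{n/(n-k+1)}\le(CA)^{1/(1-\lambda)}$ for $k=\lceil\lambda n\rceil$. All steps are sound; the only places requiring a moment's care (compactness of $K$ so that the supremum of $r_K$ on $S^{n-1}\cap E$ is attained, and the subpolynomial dimensional factors in the cap-volume estimate being absorbed into $C$) you have handled correctly.
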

We would like to state and prove functional versions of these results.
For simplicity, we will only define the functional volume ratio of
a log-concave function $f$ when $f\ge G$:
\begin{defn}
Let $f$ be a log-concave function and assume that $f(x)\ge G(x)$
for every $x$. We define the relative volume ratio of $f$ with respect
to $G$ as 
\[
V(f)=\left(\frac{\int f}{\int G}\right)^{\frac{1}{n}}=\frac{1}{\sqrt{2\pi}}\left(\int f\right)^{\frac{1}{n}}.
\]
\end{defn}
\selectlanguage{american}%
\begin{thm}
\label{thm:volume-ratio}For every $\epsilon<1<M$, every large enough
$n\in\mathbb{N}$, every log-concave $f:\R^{n}\to[0,\infty)$ such
that $f\ge G$ and every $0<\lambda<1$ one can find a subspace $E\hookrightarrow\R^{n}$
such that $\dim E\ge\lambda n$ with the following property: for every
$x\in E$ such that $e^{-\epsilon n}\ge f(x)\ge e^{-Mn}$ one have
\[
f(x)\le\left(\left[C(\epsilon,M)\cdot V(f)\right]^{\frac{2}{1-\lambda}}\cdot G\right)(x).
\]
Here $C(\epsilon,M)$ is a constant depending only on $\epsilon$
and $M$, and in fact we can take
\[
C(\varepsilon,M)^{2}=C\max\left(\frac{1}{\varepsilon},M\right).
\]
\end{thm}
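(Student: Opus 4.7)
The plan is to reduce the functional statement to the classical finite volume ratio estimate applied simultaneously to all level sets of $f$, namely the convex bodies $K_s = \{x:\phi(x)\le s\}$ for $s\in[\epsilon n,Mn]$. Using the paper's homothety convention, $(c\cdot G)(x)=e^{-|x|^2/(2c)}$, so the desired inequality $f(x)\le([C(\epsilon,M)V(f)]^{2/(1-\lambda)}\cdot G)(x)$ unwinds to the pointwise bound $\phi(x)\ge |x|^2/(2c)$ with $c=[C(\epsilon,M)V(f)]^{2/(1-\lambda)}$. This is in turn equivalent to $K_s\cap E \subseteq \sqrt{2cs}(D\cap E)$ for every $s\in[\epsilon n,Mn]$. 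Since $f\ge G$ forces $\phi\le |x|^2/2$, each $K_s$ automatically contains $\sqrt{2s}D$, so the classical finite volume ratio estimate can be applied to the rescaled body $K_s/\sqrt{2s}\supseteq D$.

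The main step is an upper bound on the volume ratio $A(s)=(|K_s|/|\sqrt{2s}D|)^{1/n}$. The naive bound $|K_s|\le e^s\int f$ yields $A(tn)\sim e^tV(f)/\sqrt{t}$, which is far too weak and would produce exponential dependence on $M$. Instead I would exploit log-concavity via a shrinkage trick. Since $f\ge G$ gives $\phi(0)\le 0$, convexity yields $\phi(\alpha y)\le\alpha\phi(y)+(1-\alpha)\phi(0)\le\alpha\phi(y)$ for $\alpha\in[0,1]$; hence $\alpha K_s\subseteq K_{\alpha s}$, and therefore
\[
\int f \;\ge\; \int_{\alpha K_s}e^{-\phi(x)}\,dx \;\ge\; \alpha^n e^{-\alpha s}|K_s|.
\]
Optimizing over $\alpha\in(0,1]$ (taking $\alpha=\min(n/s,1)$) and using the Stirling asymptotic $|D|^{1/n}\sim\sqrt{2\pi e/n}$ gives $A(tn)\le C_1V(f)\sqrt{\max(t,1/t)}$, and therefore $\max_{t\in[\epsilon,M]}A(tn)\le C_1V(f)\sqrt{\max(M,1/\epsilon)}$, which is exactly the strength needed to deliver $C(\epsilon,M)^2=C\max(1/\epsilon,M)$.

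With this estimate in hand, I would choose a multiplicative net $\{t_k\}$ in $[\epsilon,M]$ with ratio $1+1/n$, so its cardinality is $O(n\log(M/\epsilon))$, much smaller than $2^n$. Applying the random-subspace version of the classical finite volume ratio estimate to each $K_{t_kn}$ and taking a union bound produces, for $n$ large, a single subspace $E$ of dimension $\lambda n$ such that
\[
K_{t_kn}\cap E \;\subseteq\; (CA(t_kn))^{1/(1-\lambda)}\sqrt{2t_kn}\,(D\cap E)
\]
for every $k$. Given $x\in E$ with $\phi(x)=s\in[\epsilon n,Mn]$, pick the smallest $t_k$ with $t_kn\ge s$; monotonicity gives $x\in K_{t_kn}\cap E$, and combining the inclusion with the uniform bound on $A$ yields
\[
|x|^2 \;\le\; (CC_1V(f)\sqrt{\max(M,1/\epsilon)})^{2/(1-\lambda)}\cdot 2t_kn \;\le\; 2cs\,(1+1/n),
\]
which, after absorbing the $(1+1/n)$ factor into the constant for $n$ large, rearranges to $\phi(x)\ge|x|^2/(2c)$, i.e.\ the desired pointwise inequality.

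The main obstacle is the sharp volume bound in the second step: the shrinkage trick is precisely what replaces the prohibitive $e^M$ by $M$ and is responsible for the polynomial dependence of $C(\epsilon,M)$ on $M$ and $1/\epsilon$. Everything else --- discretizing the continuum of level sets by a net of subexponential size, applying the union bound, and translating the geometric inclusion back into a pointwise inequality between $f$ and $c\cdot G$ --- is a routine combination of standard techniques.
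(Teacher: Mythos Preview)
Your proposal is correct and follows essentially the same route as the paper: reduce to level sets $K_{f,\beta}=\{f\ge e^{-\beta n}\}\supseteq\sqrt{2\beta n}\,D$, bound their volume ratio by $C\sqrt{\max(\beta,1/\beta)}\,V(f)$ via the log-concave shrinkage $\alpha K_s\subseteq K_{\alpha s}$ (the paper phrases this as $K_{f,\beta}\subseteq\max(1,\beta)K_{f,1}$ together with $|K_{f,1}|\le e^n\int f$, which is exactly your optimization at $\alpha=\min(n/s,1)$), then apply the classical finite volume ratio estimate on a multiplicative net and union-bound. The only cosmetic difference is that the paper uses a net of ratio $2$ (size $O(\log(M/\epsilon))$, hence $n\gtrsim\log\log(M/\epsilon)$ suffices), whereas your ratio $1+1/n$ net is finer than necessary; either works.
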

\begin{proof}
For any \textit{$\beta>0$ define 
\[
K_{f,\beta}=\left\{ \left.x\in\R^{n}\right|f(x)\ge e^{-\beta n}\right\} .
\]
}We will bound the volume ratio of $K_{f,\beta}$ in terms of $V(f)$.
Because $f\ge G$ we get
\[
K_{f,\beta}\supseteq K_{G,\beta}=\left\{ \left.x\in\R^{n}\right|e^{-\frac{|x|^{2}}{2}}\ge e^{-\beta n}\right\} =\sqrt{2\beta n}D.
\]

We will prove a simple upper bound for the volume of $K_{f,\beta}$.
Since $f$ is log-concave one get that for every $\beta_{1}\le\beta_{2}$
\[
K_{f,\beta_{1}}\subseteq K_{f,\beta_{2}}\subseteq\frac{\beta_{2}}{\beta_{1}}K_{f,\beta_{1}.}
\]
 In particular, we can conclude that for every $\beta>0$ 
\[
K_{f,\beta}\subseteq\max(1,\beta)\cdot K_{f,1}.
\]

However, a simple calculation tells us that 
\[
\int f\ \ge\int_{K_{f,1}}\!\! f\ \ge\left|K_{f,1}\right|\cdot e^{-n},
\]
so 
\[
\left|K_{f,\beta}\right|\le\max(1,\beta)^{n}\left|K_{f,1}\right|\le\left[e\cdot\max(1,\beta)\right]^{n}\int f.
\]

Putting everything together we can bound the volume ratio for $K_{f,\beta}$
with respect to the ball $\sqrt{2\beta n}D$:
\begin{eqnarray*}
V(K_{f,\beta}) & = & \left(\frac{\left|K_{f,\beta}\right|}{\left|\sqrt{2\beta n}D\right|}\right)^{\frac{1}{n}}\le\frac{e\cdot\max(1,\beta)}{\sqrt{2\beta n}}\cdot\left(\frac{\int f}{\int G}\right)^{\frac{1}{n}}\cdot\left(\frac{\int G}{|D|}\right)^{\frac{1}{n}}\le\\
 & \le & C\max(\frac{1}{\sqrt{\beta}},\sqrt{\beta})\cdot V(f).
\end{eqnarray*}

Now we pick a one dimensional net $\epsilon=\beta_{0}<\beta_{1}<...<\beta_{N-1}<\beta_{N}=M$
such that $\frac{\beta_{i+1}}{\beta_{i}}\le2$ . Using the standard
finite volume ratio theorem for convex bodies we find a subspace $E\subseteq\R^{n}$
such that 
\begin{eqnarray*}
K_{f,\beta_{i}}\cap E & \subseteq & \left[C\max(\frac{1}{\sqrt{\beta_{i}}},\sqrt{\beta_{i}})\cdot V(f)\right]^{\frac{1}{1-\lambda}}\sqrt{2\beta_{i}n}D\subseteq\\
 & \subseteq & \left[C\max(\frac{1}{\sqrt{\epsilon}},\sqrt{M})\cdot V(f)\right]^{\frac{1}{1-\lambda}}\sqrt{2\beta_{i}n}D.
\end{eqnarray*}
for every $0\le i\le N$ (This will be possible for large enough $n$.
In fact, it's enough to take $n\ge\log\log\frac{M}{\epsilon}$).

For every $x\in E$ such that $e^{-\epsilon n}\ge f(x)\ge e^{-Mn}$
pick the smallest $i$ such that $e^{-\beta_{i}n}\le f(x)$. Then
$x\in K_{f,\beta_{i}}\cap E$ , and therefore
\[
|x|\le\left[C\max(\frac{1}{\sqrt{\epsilon}},\sqrt{M})\cdot V(f)\right]^{\frac{1}{1-\lambda}}\sqrt{2\beta_{i}n},
\]
or
\begin{eqnarray*}
G(x)=e^{-\frac{|x|^{2}}{2}} & \ge & \exp\left(-(\beta_{i}n)\cdot\left[C\max(\frac{1}{\sqrt{\epsilon}},\sqrt{M})\cdot V(f)\right]^{\frac{2}{1-\lambda}}\right)\ge\\
 & \ge & \exp\left(-(\beta_{i-1}n)\cdot\left[C^{\prime}\max(\frac{1}{\sqrt{\epsilon}},\sqrt{M})\cdot V(f)\right]^{\frac{2}{1-\lambda}}\right).
\end{eqnarray*}
This is equivalent to
\[
\left(\left[C^{\prime}\max(\frac{1}{\sqrt{\epsilon}},\sqrt{M})\cdot V(f)\right]^{\frac{2}{1-\lambda}}\cdot G\right)(x)\ge e^{-\beta_{i-1}n}>f(x)
\]
 which is exactly what we wanted.\end{proof}
\begin{rem*}
The role of $\epsilon$ and $M$ in the above theorem might seem a
bit artificial, as the condition $e^{-\epsilon n}\ge f(x)\ge e^{-Mn}$
has no analog in the classical theorem. This condition is necessary
however, as some simple examples show. For example, consider $f(x)=e^{-\phi\left(\left|x\right|\right)}$
where 
\[
\phi(x)=\begin{cases}
0 & x<\sqrt{n}\\
2\sqrt{n}x-2n & \sqrt{n}\le x\le2\sqrt{n}\\
\frac{x^{2}}{2} & 2\sqrt{n}\le x.
\end{cases}
\]
To explain the origin of this example notice that $f$ is the log-concave
envelope of $\max(G,\o_{\sqrt{n}D})$. It is easy to check that $f\ge G$
and $V(f)$ is bounded from above by a universal constant independent
of $n$. Since $f$ is rotationally invariant the role of the subspace
$E$ in the theorem is redundant, and one easily checks that $f(x)\ge e^{-\epsilon n}$
if and only if 
\[
f(x)\le\left(\frac{(\varepsilon+2)^{2}}{8\varepsilon}\cdot G\right)(x)\sim\left(\frac{1}{2\varepsilon}\cdot G\right)(x).
\]
This shows that not only does $C(\epsilon,M)$ must depend on $\epsilon$,
but the dependence we showed is essentially sharp as $\epsilon\to0$.
Similar examples show that the same is true for the dependence in
$M$.
\end{rem*}
\selectlanguage{english}%
Using theorem \ref{thm:volume-ratio} we can easily prove theorem
\ref{thm:low-ms-estimate} :

\begin{reptheorem}{thm:low-ms-estimate}

\selectlanguage{american}%
For every $\varepsilon<M$, every large enough $n\in\mathbb{N}$,
every $f:\mathbb{R}^{n}\to[0,\infty)$ such that $f(0)=1$ and $M^{\ast}(f)\le1$
and every $0<\lambda<1$ one can find a subspace $E\hookrightarrow\mathbb{R}^{n}$
such that $\dim E\ge\lambda n$ with the following property: for every
$x\in E$ such that $e^{-\varepsilon n}\ge(f\star G)(x)\ge e^{-Mn}$
one have

\[
f(x)\le\left(C(\varepsilon,M)^{\frac{1}{1-\lambda}}\cdot G\right)(x).
\]

\selectlanguage{english}%
In fact, one can take
\[
C(\varepsilon,M)=C\max\left(\frac{1}{\varepsilon},M\right).
\]

\end{reptheorem}
\begin{proof}
Define $h=f\star G$. Since $f(0)=1$ it follows that
\[
\left(f\star G\right)(x)=\sup_{x_{1}+x_{2}=x}f(x_{1})G(x_{2})\ge f(0)G(x)=G(x).
\]
 Since $\ms$ is linear $\ms(h)=\ms(f)+\ms(G)\le2$, so by theorem
\ref{thm:functional-urysohn} we get that $V(h)\le\sqrt{e}$. Applying
theorem \ref{thm:volume-ratio} for $h$, and noticing that $f(x)\le h(x)$
for all $x$, we get the result.
\end{proof}
\bibliographystyle{plain}
\bibliography{mean-width}

\end{document}